\theoremstyle{plain}
\newtheorem{Thm}{Theorem}
\newtheorem{Lem}{Lemma}
\newtheorem{Cor}{Corollary}
\newtheorem{Exa}{Example}
\newtheorem{Rem}{Remark}
\newtheorem*{Rem*}{\textsc{Remark}}
\newtheorem*{Def*}{Definition}
\newtheorem*{Prob*}{Direct Monodromy Problem}
\newtheorem*{Lem*}{\textsc{Lemma}}
\newtheorem*{Cor*}{\textsc{Corollary}}
\newtheorem*{Con*}{\textsc{Conjecture}}
\newcommand{\e}{\varepsilon}
\newcommand{\La}{{\cal{L}}}
\newcommand{\eps}{\varepsilon}
\newcommand{\bb}[1]{\mathbb{ #1 }}
\newcommand{\norm}[1]{\left| #1 \right|}
\newcommand{\av}[1]{\left\| #1 \right\|}
\newcommand{\avs}[2]{\left\| #1 \right\|_{H^{#2}}}
\newcommand{\beq}{\begin{equation}}
\newcommand{\eeq}{\end{equation}}
\long\def\symbolfootnote[#1]#2{\begingroup%
\def\thefootnote{\fnsymbol{footnote}}\footnote[#1]{#2}\endgroup}
\title{Semiclassical limit for generalized KdV equations before the gradient catastrophe}
\author{D.~Masoero\thanks{Grupo de F\'isica Matem\'atica,
Complexo Interdisciplinar da Universidade de Lisboa, Av. Prof. Gama Pinto, 2
PT-1649-003 Lisboa,  Portugal.  dmasoero@gmail.com }\,\,\, and A.~Raimondo\thanks{SISSA, Via Bonomea 265, 34136 Trieste, Italy. andrea.raimondo@sissa.it}}
\date{}
\begin{document}

\maketitle

\abstract{We study the semiclassical limit of the (generalised) KdV equation, for initial data with Sobolev regularity, before the time of the gradient catastrophe of the limit conservation law. In particular, we show that in the semiclassical limit the solution of the KdV equation: i) converges in $H^s$ to the solution of the Hopf equation, provided the initial data belongs to $H^s$, ii) admits an asymptotic expansion in powers of the semiclassical parameter, if the initial data belongs to the Schwartz class. The result is also generalized to KdV equations with higher order linearities.}

\section*{Introduction}

We consider the  following class of partial differential equations
\begin{equation}\tag{G}\label{intro:genkdv}
\partial_{t}u= a(u)\,\partial_{x}u+ \sum_{i=1}^n\e_i\,\partial_x^{2i+1}u,\qquad u(x,0)=\varphi(x),
\end{equation}
depending on a family of parameters $\e=\left(\e_1,\dots,\e_n\right)\in\mathbb{R}^{n}$.
Here $a(u)$ is a smooth function, $x\in\bb{R}$, and the initial data is independent of $\e$.
We call the above family of equations \emph{generalized KdV equations}, for it contains as a particular
example the KdV equation itself:
\begin{equation*}
\partial_{t}u= u\,\partial_{x}u+ \e\,\partial_x^{3}u \;.
\end{equation*}
Other examples are given by the Kawahara equation \cite{kawa72}, which is obtained by choosing
$a(u)=u$ and $n=2$, as well as nonlinear generalizations of KdV, for $n=1$ and $a$ arbitrary.\\

We are interested in the behaviour of the solutions of \eqref{intro:genkdv}
as the parameters $\e$ vary. In particular, we consider the behaviour when $\e\to 0$, in which limit, formally, \eqref{intro:genkdv}
becomes a quasilinear conservation law, of the form:
\begin{equation}\tag{H}\label{intro:hopf}
\partial_{t}u= a(u)\,\partial_{x}u,\qquad u(x,0)=\varphi(x).
\end{equation}
The case $a(u)=u$ is known as Hopf equation. The problem of studying solutions of equation \eqref{intro:genkdv} as $\e \to 0$ is known in the literature as the semiclassical (or singular, or dispersionless) limit.
Since solutions of equation \eqref{intro:hopf} may develop a singularity at a critical time
$0^+\leq t_c <\infty$, we study local-in-time solutions of \eqref{intro:genkdv} for those classes of initial data, to be specified below,
for which the Cauchy problem for the generalized KdV equation is locally well-posed uniformly with respect to $\e \in \bb{R}^n$.\\

There is an extensive literature on the initial value problem for generalized KdV equations, which is based essentially on
two distinct approaches: the first makes use of the inverse scattering and Riemann-Hilbert methods, and it applies to those
equations of the class \eqref{intro:genkdv} which are integrable, such as KdV. The second applies to a more general class of equations, and makes use of
fix-point arguments for the associated integral equation
\begin{align}\tag{W}
 & u(t;\e)=W(t;\e)u(0;\e)+\int_0^t W(t-s;\e) a(u(s;\e))u_x(s;\e) ds \, ,\\ \nonumber
& W(t;\e)= \exp{\big( t \sum_{i=1}^n \e_i \partial_x^{2i+1}  \big)}  \; .
\end{align}

Since the seminal paper of Lax and Levermore   \cite{lale83}, the method of inverse scattering has been successfully used  to study the semiclassical limit of the KdV equation, both before and after the critical time $t_c$ of the Hopf equation.
For time smaller than $t_c$, rigorous results have been obtained for those initial data whose scattering transform can be computed
in the semiclassical limit using the WKB analysis. In this case, the corresponding solutions are proven to converge in $L^2$
to the solutions of the Hopf equation \cite{lale83,venakides85}.
If, in addition,  the initial data satisfy some analyticity assumptions, then the powerful nonlinear
steepest-descent analysis \cite{deift93} applies to the study of the semiclassical limit. For these initial data,
the solutions are known to converge uniformly - see \cite{deift98,claeys2009}. \\ 

The integral equation \thetag{W} has been a major topic of investigation since it was used by Kenig, Ponce and Vega \cite{kenig91} to
establish local well-posedness of \eqref{intro:genkdv} for polynomial nonlinearity $a(u)$ and dispersion $\e \neq 0$.
In the particular case of the KdV equation, the authors of \cite{kenig91} have been able to prove local well-posedness in $H^s, s>\frac{3}{4}$.
Their results were then refined by many authors to obtain local and
global well-posedness for low-regular initial data, see for instance
\cite{bourgain93, kenig96,colliander2003}.

The theory of equation \thetag{W} relies heavily on the dispersive character of
equation \eqref{intro:genkdv}, being based on
the smoothing effects of the linear evolution operator $W$. Hence, it may
be very difficult to apply it to the study of the semiclassical limit.
Moreover, since the equation \thetag{H} may be ill-posed for $s\leq\frac{3}{2}$,
it seems unreasonable to study the semiclassical limit for low-regular solutions.\\

Due to the limitations of the inverse scattering transform and of the
integral equation \thetag{W},
in order to deal with the semiclassical limit of the general equation \eqref{intro:genkdv} we choose a different approach, namely Kato's theory of
quasi-linear equations \cite{ka75}. This approach turns out to be particularly suitable to our problem because it allows to treat equation \eqref{intro:genkdv}
for all values of $\e \in \bb{R}^n$ on the same footing, and it is
very robust under perturbations.\footnote{An alternative method, which may be suitable for the study of the semiclassical limit, is given by the Bona-Smith energy method \cite{bosm75}.}\\

Following Kato, we consider \eqref{intro:genkdv} as a quasilinear equations on a
Banach space $X$, of the form
\begin{equation}\tag{Q}
\frac{du(\e)}{dt}= A(u(\e); \e)u + f(u(\e);\e),\quad  0\leq t\leq
T,\quad u(0)=\varphi.
\end{equation}
Here  $A(y; \e)$ is a linear operator,
depending on  $\e \in \bb{R}^n$ and on some element $y\in X$. In addition, for
any fixed $\e$ and $y$
the operator $A(y;\e)$ generates a $C_0-$semigroup on $X$. In our case,
$$A(y;\e)=a(y)\,\partial_{x}+ \sum_{i=1}^n\e_i\,\partial_x^{2i+1},\qquad f=0,$$
and we choose $X=L^2(\bb{R})$ . Kato himself used his theory to construct local--in--time  solutions of equation
\eqref{intro:genkdv}. In particular,
he established local well-posedness for the KdV equation in $H^s, s >
\frac{3}{2}$, both when $\e \neq 0$ \cite{ka83}
and when $\e=0$ \cite{ka75}.
However, he did not consider the semiclassical limit.

We establish  simple conditions, under which the local-in-time solution of the Cauchy problem \eqref{intro:genkdv} with initial datum in $H^{s}$ is continuous
and $N-$differentiable with respect to $\e$.

Essentially, we show that:
\begin{itemize}
\item If $s\geq 2n+1$, the local-in-time solution $u(\e)$ of
\thetag{G} is continuous with respect to $\e \in \bb{R}^n$ \; .
\item Let $K=\sum_{i=1}^{n}N_i(2i+1)$ and $N=\sum_{i=1}^{n}N_i$. If
$s-K \geq 2n+1$, then the partial derivative
$$\frac{\partial^N u(\e)}{\partial \e_1^{N_1}\dots\partial \e_n^{N_n}}$$ 
exists in  $H^{s-K}$ and it is continuous with respect to $\e \in \bb{R}^n$.
\end{itemize}

\vspace{20pt}

The above results can be applied to the project, proposed by Du\-bro\-vin and Zhang \cite{duzh01,du06}, of Hamiltonian perturbations of
quasilinear conservation laws \thetag{H}.
Indeed, any equation the form \thetag{H} can be written as an infinite dimensional Hamiltonian system;  within this theory,
one looks for suitable deformations -- depending on arbitrary functions of $u$ and its derivatives -- such that the equation
remains Hamiltonian. One simple example is given by KdV, which can be obtained as a Hamiltonian perturbation of the Hopf equation. 

In addition to the above problem, the project includes a classification of integrable perturbations and a characterization of
the solutions of the perturbed equations, both before the critical time and in a neighborhood of it.
It should also be noted that the project applies not only to single equations, but also to systems of first oder quasilinear PDEs, \cite{du06,du10,duzh01} . \\

Before critical time, the Dubrovin-Zhang theory provides a way to construct solutions of the perturbed equation
in terms of solutions of the unperturbed one. Without going into detail, one looks for solutions of the perturbed equations
as formal power series $u=v^{0}+v^{1}\,\e^{2}+\dots,$ and argues that $v^{0}$ is a solution of the unperturbed equation,
while the subsequent coefficients can by obtained from $v^{0}$ by a recursive procedure. This construction, although extremely
powerful in predicting behaviour of the solutions, is based on formal identities, and requires rigorous justification.\\

The paper is organized as follows: after recalling in Section \ref{sec:semigroups} some basic elements of the theory of $C_{0}-$semigroups,
we consider in Section \ref{sec:kato} the results obtained by Kato in \cite{ka75}, which we will use in order to prove our results.\\

Sections \ref{sec:continuity} and \ref{sec:kdvderivatives} are the core of the paper. We first prove the existence of a positive time $T$
for which the solutions of the problem \eqref{intro:genkdv}, with initial data in $H^{s}$, $s\geq 2\,n+1$,  are continuous functions of
the parameters $\e$. In particular, such solutions are continuous as $\e\to 0$, implying $H^{s}-$convergence to the solution of \thetag{H}
in this limit. This result generalizes the one obtained by Lax and Levermore (before the critical time) to equations of
type \eqref{intro:genkdv} -- which are not necessarily integrable -- and to initial data in the Sobolev space $H^{s}$.\\

In Section \ref{sec:kdvderivatives} we consider differentiability of solutions of \eqref{intro:genkdv} with respect to $\e$.
Although our result   -- with suitable modifications -- holds for every equation of type \eqref{intro:genkdv}, for simplicity we
consider in detail the KdV example only. We show that if the initial datum of the KdV equation lies in the Sobolev space $H^{s}$,
then the solution of the Cauchy problem is $N-$times differentiable with respect to $\e$, for $N=\lfloor s/3-1\rfloor$.\\

In Section \ref{sec:dubrovin} we present the Dubrovin-Zhang theory of Hamiltonian perturbations of equation \thetag{H}, considering those
aspects of the theory which are directly related to the results of the present paper: the classification results of Hamiltonian
perturbations and the construction of the solutions of the perturbed equations before critical time.\\

In the last Section we apply the results obtained in Sections \ref{sec:continuity}  and \ref{sec:kdvderivatives} to the Dubrovin's theory.
Since equations of type \eqref{intro:genkdv} can be seen as Hamiltonian perturbations of \thetag{H}, we provide -- for this class of equations --
a rigorous justification to the heuristic results of Section \ref{sec:dubrovin}. In addition, we find an explicit formula for the coefficient $v^{1}$
of the solution of a generic Hamiltonian perturbation of \thetag{H} in terms of the solution $v^{0}$ of the unperturbed equation. This is of the form:
\begin{equation*}
v^{1}=\frac{t}{2}\,\frac{\partial}{\partial x}\! \left(\frac{\left(c\,a'\right)'(v^{0}_{x})^{2}+2\,c\,a'\,v^{0}_{xx}+t\,c\,(a')^{2}v^{0}_{x}\,v^{0}_{xx}+t\,c'\,(a')^{2}(v^{0}_{x})^{3}}{(1+t\,a'\,v^{0}_{x})^{2}}\right),
\end{equation*}
where $a=a(v^{0})$ is the non-linearity of \thetag{H}, and $c=c(v^{0})$ is a function characterizing the Hamiltonian perturbation.

\subsection*{Notation}
Given the real Banach spaces $X,Y,\dots$, we let $\av{\;\,}_X, \av{\;\,}_Y, \dots $ denote the corresponding norms.
$\La(Y,X)$ denotes the Banach space of bounded linear operators from $Y$ to $X$, with norm $\av{\;\,}_{Y,X}$, while $\La(X)$ denotes the Banach space of bounded linear operator from $X$ to itself with the norm $\av{\;\,}_X$.
We call $D(A)$ the domain of an operator $A$. $L^2:=L^2(\bb{R})$ denotes the Hilbert space of square integrable real functions and
$H^s:=H^s(\bb{R}), s \geq 0$ denotes the Sobolev space of order $s$.
The symbol $\partial^n_x$ denotes the $n$-th derivative with respect to $x$ or the corresponding operator on $L^2$
with domain $H^n$.\\

In the present paper we consider real Banach spaces and real functions only.

\paragraph{Acknowledgments}
We are grateful to Boris Dubrovin, Percy Deift, Ken McLaughlin and Tamara Grava
for encouraging us in our research. A.R. and D.M. thank, respectively, the Grupo de F\'isica Matem\'atica da Universidade de Lisboa and the SISSA Mathematical
Physics sector for the kind hospitality.

The research was partially supported by the INDAM--GNFM \lq Progetto Giovani $2010$\rq. D.M. is supported by a Postdoc scholarship of the Funda\c{c}\~ao para a Ci\^encia e a Tecnologia,
project PTDC/MAT/104173\-/2008 (Probabilistic approach to finite and infinite dimensional dynamical systems).

\section{$C_0-$semigroups}\label{sec:semigroups}
The theory of $C_0-$semigroup is a standard tool in analysis.
Following \cite{ka75} and \cite{pazy83}, we recall the elements of the theory we will use in the rest of the paper.

A one parameter family of linear operators $\left\{T(t), \,0\leq t <\infty\right\}$ on a Banach space $X$ is a \emph{$C_0-$semigroup} if it
is a strongly continuous semigroup of bounded linear operators, namely, it satisfies:
\begin{itemize}
 \item $T(0)=I,\qquad T(t)T(s)=T(t+s),\qquad t,s\geq 0$,
\item $\lim_{t \downarrow 0} T(t)x=x,\qquad \forall x \in X$.
\end{itemize}
Here $I$ is the identity operator on $X$. The linear operator defined by
\begin{equation*}
 D(A)=\left\lbrace x \in X:\lim_{t \downarrow 0} \frac{T(t)x-x}{t} \;\; \mbox{exists } \right\rbrace
\end{equation*}
and
\begin{equation*}
A x=\lim_{t \downarrow 0} \frac{T(t)x-x}{t}, \qquad \forall x \in D(A),
\end{equation*}
is the \textit{infinitesimal generator} of the $C_{0}-$semigroup $\left\{T(t)\right\}$. The operator $A$ is closed and densely defined.

A standard theorem shows that for any $C_0-$semigroup there exist
two positive constants $M\geq1$ and $\beta\geq0$ such that
\begin{equation}\label{eq:constants}
 \av{T(t)}_X\leq Me^{\beta t}, \qquad 0\leq t <\infty .
\end{equation}
In particular, a $C_0-$semigroup with constants $M=1,\beta=0$ is called a \emph{semigroup of contractions}. We denote by $G(X,M,\beta)$ the set of infinitesimal generators of $C_0-$semigroups with constants $M$, $\beta$. 
\begin{Rem}
Note that if $A \in G(X,1,\beta)$, then $A-\beta I \in G(X,1,0)$.
\end{Rem}
Later on we will need a perturbation theorem for generators of semigroups of contractions; for this purpose we introduce the following notions.

An operator $A$ on a Hilbert space $X$ is said to be \emph{dissipative} if for every $x \in D(A)$, we have $Re(Ax,x)\leq0$. 

If $A$ and $B$ are operators on a Banach space $X$, we say that $B$ is \emph{relatively bounded} with respect to $A$ with relative bound
$\rho \geq 0$ if $D(A) \subset D(B)$ and there exists a $\sigma \geq0$ such that
\begin{equation}\label{eq:relativebound}
\av{Bx}_X\leq \rho \av{Ax}_X +\sigma\av{x}_X, \quad\forall x \in D(A).
\end{equation}

\begin{Thm}\label{thm:perturbation}
 Let $A \in G(X,1,0)$ be the generator of a $C_0-$semigroup of contractions on a Hilbert space $X$. Let $B$ be dissipative and relatively bounded
with respect to $A$ with relative bound $\rho<1$.
Then $A+B$ is the generator of a semigroup of contractions.
\begin{proof}
See \cite{pazy83}, Corollary 3.3
\end{proof}

\end{Thm}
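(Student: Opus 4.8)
The plan is to reduce the statement to the Lumer--Phillips characterization of generators of contraction semigroups on a Hilbert space: a densely defined operator $C$ generates a $C_0$-semigroup of contractions if and only if $C$ is dissipative and $\mathrm{Range}(\lambda I-C)=X$ for some $\lambda>0$. So I would take $C=A+B$ and verify these properties in turn. Since $B$ is relatively bounded with respect to $A$, we have $D(A)\subset D(B)$, hence $D(A+B)=D(A)$, which is dense because $A$, being a generator, is densely defined; this settles the density requirement. Dissipativity of $A+B$ is also immediate: $A$ is dissipative (a generator of a contraction semigroup on a Hilbert space must be, by the converse half of Lumer--Phillips), $B$ is dissipative by hypothesis, and for $x\in D(A)$ one has $\mathrm{Re}((A+B)x,x)=\mathrm{Re}(Ax,x)+\mathrm{Re}(Bx,x)\le 0$.

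The real content is the range condition. Writing $R(\lambda,A)=(\lambda I-A)^{-1}$, which exists and is bounded with $\av{R(\lambda,A)}_X\le 1/\lambda$ for $\lambda>0$ (Hille--Yosida, applied to the contraction semigroup generated by $A$), I would use on $D(A)$ the factorization
\[
\lambda I-(A+B)=\bigl(I-B\,R(\lambda,A)\bigr)(\lambda I-A).
\]
Since $\lambda I-A$ maps $D(A)$ onto $X$, it suffices to make $I-B\,R(\lambda,A)$ invertible on $X$ for some $\lambda>0$, and this follows from a Neumann series as soon as $\av{B\,R(\lambda,A)}_X<1$. To estimate this norm, take $y\in X$, set $w=R(\lambda,A)y\in D(A)$, and apply the relative bound \eqref{eq:relativebound}: $\av{Bw}_X\le \rho\av{Aw}_X+\sigma\av{w}_X$, where already $\av{w}_X\le \lambda^{-1}\av{y}_X$.

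The main obstacle, and the point at which the Hilbert-space hypothesis is essential, is controlling $\av{Aw}_X=\av{A\,R(\lambda,A)y}_X$. The purely algebraic identity $A\,R(\lambda,A)=\lambda R(\lambda,A)-I$, valid in any Banach space, only yields $\av{A\,R(\lambda,A)}_X\le 2$, which would force the useless restriction $\rho<1/2$. Instead I would exploit dissipativity through the inner product: for $x\in D(A)$,
\[
\av{(\lambda I-A)x}_X^2=\lambda^2\av{x}_X^2-2\lambda\,\mathrm{Re}(Ax,x)+\av{Ax}_X^2\ge \av{Ax}_X^2,
\]
so that $\av{Ax}_X\le\av{(\lambda I-A)x}_X$ and hence $\av{A\,R(\lambda,A)}_X\le 1$. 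Combining the estimates gives $\av{B\,R(\lambda,A)}_X\le \rho+\sigma/\lambda$, which is strictly less than $1$ once $\lambda>\sigma/(1-\rho)$; this is exactly where the hypothesis $\rho<1$ is used. For such $\lambda$ the range condition holds, and Lumer--Phillips then yields that $A+B$ generates a $C_0$-semigroup of contractions, completing the argument.
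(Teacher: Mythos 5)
Your argument is correct. The paper does not give its own proof but defers to Pazy's Corollary 3.3, whose standard proof is a \emph{method of continuity}: one first treats small relative bound via a Neumann series with the crude Banach-space estimate $\|AR(\lambda,A)\|\le 2$, and then reaches an arbitrary $\rho<1$ by moving along the family $A+tB$, $t\in[0,1]$, in finitely many steps of uniform size, showing at each step that generation persists. That route works in any Banach space. What you do instead is exploit the Hilbert-space structure from the outset: the identity $\|(\lambda I-A)x\|^2=\lambda^2\|x\|^2-2\lambda\,\mathrm{Re}(Ax,x)+\|Ax\|^2$ together with dissipativity of $A$ sharpens the resolvent estimate to $\|AR(\lambda,A)\|\le 1$, so the single Neumann-series step already covers every $\rho<1$ once $\lambda>\sigma/(1-\rho)$, and Lumer--Phillips (whose hypotheses -- density of $D(A+B)=D(A)$, dissipativity of the sum, and the range condition -- you verify correctly) concludes. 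The trade-off is exactly the one you identify: your proof is shorter and entirely self-contained but genuinely uses the inner product, whereas the cited proof is longer but Banach-space general. Since the theorem as stated in the paper is restricted to Hilbert spaces (the Banach generalization being only remarked upon), your argument fully proves the statement as given, and I see no gap in it.
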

The definition of dissipative operator and Theorem \ref{thm:perturbation} can be generalised to any Banach space,
with slight modifications. However, such generalisation is not necessary in our study. The following examples will be useful in the rest of the paper.

\begin{Exa}\label{ex:d3}
Let $X$ be a Banach space and $A$ be an anti-self-adjoint operator.
Due to the Stone Theorem, $A$ generates a $C_0-$group of unitary operators,
hence a $C_0-$semigroup of contractions. For instance, the derivative operator $\partial^{2n+1}_x$, $n \in \mathbb{N}$, with domain $D(\partial^{2n+1}_x)=H^{2n+1}$, is an anti-self-adjoint operator
on the space $L^2$. More generally, the operator 
\begin{equation}
D^{2n+1}_{\e}=\sum_{i=0}^{n}\e_i\partial^{2i+1}_x,\qquad \e=(\e^{1},\dots,\e^{n}) \in \bb{R}^{n},
\end{equation}
with domain $H^{2n+1}$ is anti-self-adjoint on $L^2$ for any value of the parameter $\e$.
\end{Exa}

\begin{Exa}\label{ex:fd}\cite{ka75}
Let $f(x)$ be a bounded differentiable function with bounded derivative on the whole real axis, take $X=L^2(\mathbb{R})$, $B=f(x) \partial_x$ and $D(B)=H^1$.
The operator $B$ can be decomposed into an anti-self-adjoint part and a bounded self-adjoint part, $B=B_{1}+B_{2}$ with:
$$B_{1}= (f\partial_x+ \frac{1}{2}f_x),\qquad B_{2}=-\frac{1}{2}f_x \;.$$
In particular, $B_{1}$ has domain
$H^1$ and is anti-self-adjoint, thus, it generates a $C_0-$semigroup of contractions. On the other hand, $B_{2}$ is a bounded self-adjoint operator with norm  $\av{B_{2}}_{L^{2}}=\frac{1}{2}\sup_{x \in \mathbb{R}}|f_{x}(x)|$, and a simple computation shows that $ B_{2} -\av{B_{2}}_{L^{2}} I $ is dissipative. Due to Theorem \ref{thm:perturbation}, we have that $B-\av{B_{2}}_{L^{2}} I$
generates a $C_0-$semigroup of contractions. Therefore, $B \in G(X,1,\beta)$ with $\beta=\av{B_{2}}_{L^{2}}$. In addition, $B$ is relatively bounded with respect to
$\partial_{x}^{2n+1}$, $n\geq 1$ with any relative bound $\rho>0$. Due to Theorem \ref{thm:perturbation} and the above discussion, 
$$B+\e\, \partial_{x}^{2n+1} \in G(X,1,\beta),$$
for any $\e \in \mathbb{R}$. 
\end{Exa}

\begin{Exa}\label{ex:multiplication}
Let $g(x)$ be a continuous bounded function on $\bb{R}$, take $X=L^2(\mathbb{R})$, and let $C=g(x)$ be the operator of multiplication by $g$, with $D(C)=L^2$.
We have that $C$ is a bounded operator, with norm $\beta'=sup_{x\in \bb{R}}\norm{g(x)}$.
Due to Theorem \ref{thm:perturbation}, if $A \in G(X,1,\beta)$ then
$A+C \in G(X,1,\beta+\beta')$. 
\end{Exa}

\section{Kato's theory on quasilinear equations}\label{sec:kato}
In this section we review Kato's results  on quasilinear equations on a Banach space $X$, of the form 
\begin{equation}\label{eq:general}
\frac{d u}{dt}=A(t,u)\,u+f(t,u),\quad 0\leq t\leq T, \quad u(0)=\varphi.
\end{equation}
Here $A(t; y)$ is a linear operator, depending on the time $t$ and on
some element $y \in X$, and such that for any fixed  $t$ and $y$ the operator $A(t; y)$ generates a $C_{0}-$semigroup on $X$. It should be noted that we do not present these theorems in their strongest form, but in a form adequate to our purpose. For the reader's convenience, we follow -- as much as possible -- the notation of the original paper \cite{ka75}.

First, we consider the linear case, when the operator $A$ and the forcing term $f$ do not depend on $u$. 
\begin{Thm}\label{thm:linear}
The linear non-homogeneous Cauchy problem 
\begin{equation}\label{eq:linear}
 \frac{du}{dt}= A(t)u + f(t), \quad 0\leq t\leq T, \quad u(0) \in Y,
\end{equation}
has a unique solution 
$$u(t) \in C([0,T];Y) \cap C^1([0,T];X),$$ 
provided the following assumptions are satisfied:
\begin{itemize}
     \item[(i)] $X$ is a Banach space and $Y\subset X$ is another Banach space, continuously and densely embedded in $X$. Moreover, there exists an
isomorphism $S$ of $Y$ into $X$.
 \item[(ii)]There exists a positive $\beta$ such that $A(t) \in G(X,1,\beta)$ $\forall t \in [0,T]$.
\item[(iii)]$SA(t)S^{-1}-A(t)=B(t) \in \La(X)$, and $t \mapsto B(t)$ is a continuous operator valued function.
\item[(iv)]$Y \subset D(A(t))$ so that $A(t) \in \La(Y,X)$, and $t \mapsto A(t)$ is a continuous operator valued function.
\item[(v)]$u(0) \in Y$ and $ f\in C([0,T];Y)$.
\end{itemize}
\begin{proof}
See \cite{ka73}, Theorem I and II. 
\end{proof}
\end{Thm}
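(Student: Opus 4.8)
The plan is to reduce the inhomogeneous Cauchy problem to the construction of a two-parameter \emph{evolution operator} (propagator) $U(t,s)$, $0\le s\le t\le T$, for the homogeneous equation $u'=A(t)u$, and then to produce the solution of \eqref{eq:linear} through the Duhamel representation
\beq\label{eq:duhamel}
u(t)=U(t,0)\,\varphi+\int_0^t U(t,s)\,f(s)\,ds.
\eeq
To build $U(t,s)$ I would freeze the coefficient: partition $[0,T]$ into $n$ subintervals and on each piece replace $A(t)$ by its value $A(t_k)$ at a node, obtaining a piecewise-constant approximant $A_n$. By hypothesis (ii) each $A(t_k)$ generates a $C_0-$semigroup with $\av{e^{\tau A(t_k)}}_X\le e^{\beta\tau}$, so concatenating these semigroups gives an approximate propagator $U_n(t,s)$ satisfying the uniform bound $\av{U_n(t,s)}_X\le e^{\beta(t-s)}$ for every $n$; this holds precisely because the constants $M=1,\beta$ in (ii) are uniform in $t$, which is the role of the class $G(X,1,\beta)$.

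First I would establish strong convergence $U_n(t,s)\to U(t,s)$ in $\La(X)$ as the mesh tends to zero. The standard device is the telescoping identity expressing $U_n-U_m$ as an integral of $U_m(t,r)\bigl(A_n(r)-A_m(r)\bigr)U_n(r,s)$ applied to elements of $Y$; continuity of $t\mapsto A(t)\in\La(Y,X)$ from (iv) makes the integrand small on $Y$, and the uniform bound together with the dense embedding $Y\subset X$ from (i) upgrades convergence to all of $X$. The limit $U(t,s)$ then inherits the evolution identities $U(t,s)U(s,r)=U(t,r)$, $U(s,s)=I$, and for $x\in Y$ one recovers $\partial_t U(t,s)x=A(t)U(t,s)x$ and $\partial_s U(t,s)x=-U(t,s)A(s)x$ as limits in $X$.

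The crucial point, and the one I expect to be the main obstacle, is to show that $U(t,s)$ leaves the smaller space $Y$ invariant with a uniform bound and acts strongly continuously there; this is what hypothesis (iii) is designed for. I would conjugate by the isomorphism $S$ of (i): formally $S\,U(t,s)\,S^{-1}$ is the propagator generated by $S A(t) S^{-1}=A(t)+B(t)$, and since $B(t)\in\La(X)$ is a \emph{bounded} perturbation that is continuous in $t$, the family $A(t)+B(t)$ is again stable. This is the bounded-perturbation mechanism underlying Theorem \ref{thm:perturbation} and Example \ref{ex:multiplication}, giving $A(t)+B(t)\in G(X,1,\beta')$ for a suitable $\beta'$. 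Hence $S U(t,s) S^{-1}$ is a bona fide propagator on $X$, uniformly bounded, so that $U(t,s)=S^{-1}\bigl(S U(t,s) S^{-1}\bigr)S$ is bounded on $Y$ with $\av{U(t,s)}_Y\le e^{\beta'(t-s)}$, and continuity in the $Y$-topology follows by the same conjugation.

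Finally I would verify that \eqref{eq:duhamel} has the asserted regularity and solves \eqref{eq:linear}. Because $\varphi\in Y$ and $f\in C([0,T];Y)$ by (v), both terms take values in $Y$ and depend continuously on $t$, giving $u\in C([0,T];Y)$; differentiating \eqref{eq:duhamel} in $X$ and using $\partial_t U(t,s)x=A(t)U(t,s)x$ for the $Y-$valued integrand $f(s)$ yields $u'=A(t)u+f$ in $C([0,T];X)$, the boundary term from differentiating the integral producing exactly $U(t,t)f(t)=f(t)$. Uniqueness is then immediate: for any solution $u$ one computes $\frac{d}{ds}U(t,s)u(s)=U(t,s)f(s)$ in $X$ and integrates from $0$ to $t$ to recover \eqref{eq:duhamel}, so the solution is forced.
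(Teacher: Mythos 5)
The paper offers no proof of its own here---it simply defers to Kato \cite{ka73}, Theorems I and II---and your outline is a faithful sketch of exactly the construction carried out in that reference: product-integral (frozen-coefficient) approximation with the stability bound coming from (ii), conjugation by $S$ together with the bounded perturbation $B(t)$ from (iii) to obtain invariance and uniform bounds on $Y$, and the Duhamel formula for the inhomogeneous term. The one reordering worth noting is that in Kato's argument the uniform $Y$-bound on the approximants $U_n(t,s)$ (obtained by the same conjugation mechanism you describe, applied at the approximant level) must be established \emph{before} the telescoping convergence estimate, since the integrand $U_m(t,r)\bigl(A_n(r)-A_m(r)\bigr)U_n(r,s)x$ is only controlled via (iv) when $U_n(r,s)x$ stays bounded in $Y$; with that caveat your proposal matches the cited proof.
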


The following is a perturbation theorem for the linear equation \eqref{eq:linear}.
\begin{Thm}\label{thm:linearpert}
In addition to the assumptions of Theorem \ref{thm:linear}, consider the sequence of Cauchy problems
 \begin{equation}\label{eq:linearpert}
 \frac{du^n}{dt}= A^n(t)u^n + f^n(t),\quad 0\leq t\leq T, \quad u^n(0) \in Y,
\end{equation}
and suppose that, for any fixed $n$, the operator $A^n$ satisfies conditions (i) through (v) of Theorem \ref{thm:linear}.
Moreover, suppose that
\begin{itemize}
 \item[(vi)]$A^n(t) \to A(t)$ strongly in $\La(Y,X)$, and $\sup_{t \in [0,T]}\av{A^n(t)}_{Y,X}$ is uniformly bounded in n.
\item[(vii)]$B^n(t) \to B(t)$ strongly in $\La(X)$, and $\sup_{t \in [0,T]}\av{B^n(t)}_{X}$ is uniformly bounded in n.
\item[(viii)]$u^n(0) \to u(0)$ in $Y$ and $f^n \to f$ in $C([0,T],Y)$.
\end{itemize}
Then  $u^n(t) \to u(t)$ in $C([0,T],Y) \cap C^1([0,T],X)$, where
$u^n(t)$ is the unique solution of (\ref{eq:linearpert}) and $u(t)$ is the unique solution of (\ref{eq:linear}).
\begin{proof}
See \cite{ka73} Theorem V-VI.
\end{proof}
\end{Thm}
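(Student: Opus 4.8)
The plan is to solve both Cauchy problems by means of their two-parameter evolution operators (propagators) and then to establish convergence of these propagators, in the spirit of the Trotter--Kato approximation theorem. Denote by $U(t,s)$ the propagator generated by the family $\{A(t)\}$, so that the unique solution of \eqref{eq:linear} given by Theorem \ref{thm:linear} is
$$u(t)=U(t,0)u(0)+\int_0^t U(t,s)f(s)\,ds,$$
and similarly $u^n(t)=U^n(t,0)u^n(0)+\int_0^t U^n(t,s)f^n(s)\,ds$. Since every object in the conclusion is built from the propagators and the data, the theorem follows once I prove that $U^n(t,s)\to U(t,s)$ strongly on $Y$, uniformly on the simplex $0\le s\le t\le T$, and then combine this with the convergence of the data in (viii).

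First I would record the uniform stability of the approximating propagators. From (ii) and the $n$-uniform membership $A^n(t)\in G(X,1,\beta)$ one obtains $\av{U^n(t,s)}_X\le e^{\beta(t-s)}$, uniformly in $n$. For the companion bound in the stronger norm I conjugate by the isomorphism $S$: by (iii), applied to each $A^n$ with its bounded $B^n$, one has $SU^n(t,s)S^{-1}=\tilde U^n(t,s)$, the propagator on $X$ of $\tilde A^n(t):=A^n(t)+B^n(t)=SA^n(t)S^{-1}$; since the $B^n$ are uniformly bounded on $X$ by (vii), Theorem \ref{thm:perturbation} gives $\tilde A^n(t)\in G(X,1,\beta')$ with $\beta'$ independent of $n$, whence $\av{\tilde U^n(t,s)}_X\le e^{\beta'(t-s)}$ and, $S$ being an isomorphism, $\av{U^n(t,s)}_Y\le M'e^{\beta'(t-s)}$, again uniformly in $n$. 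This $n$-uniform boundedness in $\La(Y)$ is what will make the final limiting argument close.

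The heart of the matter is the convergence of the propagators, which I would extract from the fundamental identity
$$\bigl[U^n(t,s)-U(t,s)\bigr]x=\int_s^t U^n(t,\tau)\,\bigl[A^n(\tau)-A(\tau)\bigr]\,U(\tau,s)x\,d\tau,\qquad x\in Y.$$
For $x\in Y$ the integrand makes sense because $U(\tau,s)x\in Y\subset D(A(\tau))$; by the consistency assumption (vi) the factor $[A^n(\tau)-A(\tau)]U(\tau,s)x\to0$ in $X$, while $U^n(t,\tau)$ is bounded in $\La(X)$ by the stability above, so dominated convergence yields convergence in $X$. The main obstacle is to upgrade this to convergence in the norm of $Y$, which is exactly what is needed to control the terms $[U^n(t,0)-U(t,0)]u(0)$ and $[U^n(t,s)-U(t,s)]f(s)$ below. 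This is a genuine loss of derivative: (vi) bounds $A^n-A$ only as a map $Y\to X$, so that $[A^n(\tau)-A(\tau)]U(\tau,s)x$ lies in $X$ but not in $Y$, and the naive estimate closes only in the weaker norm. I would overcome this by conjugating with $S$ and arguing by density. On the smoother subspace $Z:=\{x\in Y:Sx\in Y\}$, dense in $Y$, the persistence of regularity for \eqref{eq:linear} ensures $\tilde U(\tau,s)(Sx)=S\,U(\tau,s)x\in Y$, where $\tilde U$ is the propagator of $\tilde A:=SAS^{-1}=A+B$; the fundamental identity for the conjugated propagators then applies with
$$\bigl[\tilde A^n(\tau)-\tilde A(\tau)\bigr]=\bigl[A^n(\tau)-A(\tau)\bigr]+\bigl[B^n(\tau)-B(\tau)\bigr]$$
acting on the element $S\,U(\tau,s)x\in Y$, the first summand tending to $0$ in $X$ by (vi) and the second by (vii). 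This gives $S[U^n(t,s)-U(t,s)]x\to0$ in $X$, i.e. $Y$-convergence, for $x\in Z$; an $\eps/3$ argument using the uniform bound $\sup_n\av{U^n(t,s)}_Y<\infty$ then extends the $Y$-convergence to all of $Y$, uniformly on the simplex.

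With the propagator convergence established, the conclusion is routine. Writing
$$u^n(t)-u(t)=\bigl[U^n(t,0)-U(t,0)\bigr]u(0)+U^n(t,0)\bigl[u^n(0)-u(0)\bigr]+\int_0^t\Bigl\{\bigl[U^n(t,s)-U(t,s)\bigr]f(s)+U^n(t,s)\bigl[f^n(s)-f(s)\bigr]\Bigr\}\,ds,$$
the groups carrying $[U^n-U]$ vanish in $Y$ by the propagator convergence and dominated convergence, while the terms carrying $u^n(0)-u(0)$ and $f^n(s)-f(s)$ vanish by the $n$-uniform $\La(Y)$-stability together with (viii); hence $u^n\to u$ in $C([0,T];Y)$. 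Finally, differentiating the integral equation gives $\frac{du^n}{dt}=A^n(t)u^n+f^n(t)$, and $A^n(t)u^n-A(t)u=A^n(t)(u^n-u)+(A^n(t)-A(t))u\to0$ in $X$ uniformly in $t$ — using the uniform bound on $\av{A^n(t)}_{Y,X}$ from (vi) with the $C([0,T];Y)$-convergence for the first term, and (vi) directly for the second — so that $u^n\to u$ also in $C^1([0,T];X)$.
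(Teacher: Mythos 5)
The paper gives no proof of its own here --- it simply cites Kato --- so I am comparing your argument against the standard one. Your overall architecture (representing $u^n$ and $u$ through the two-parameter propagators, proving strong convergence $U^n(t,s)\to U(t,s)$ in $X$ via the identity $[U^n(t,s)-U(t,s)]x=\int_s^t U^n(t,\tau)[A^n(\tau)-A(\tau)]U(\tau,s)x\,d\tau$ for $x\in Y$, and then transferring the convergence to $Y$ by conjugation with $S$) is exactly the classical one, and both the $X$-convergence step and the final assembly from the Duhamel formulas are sound. The gap is in the step where you upgrade the propagator convergence to the $Y$-norm. You apply the fundamental identity to the conjugated propagators on the subspace $Z=\{x\in Y:\,Sx\in Y\}$, asserting that $Z$ is dense in $Y$ and that ``persistence of regularity'' gives $S\,U(\tau,s)x\in Y$ for $x\in Z$. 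Neither assertion follows from hypotheses (i)--(v) of Theorem \ref{thm:linear}: density of $Z$ in $Y$ would require $S$ to map a dense subset of $Y$ back into $Y$, which is not assumed, and invariance of $Z$ under $U(\tau,s)$ is a second-order regularity statement that would need a commutator condition at the level of $S^2A S^{-2}$ --- precisely the kind of control that (iii) supplies for one power of $S$ but not for two. (In the paper's application $S=\Lambda^s$ and $Z=H^{2s}$, so your claim amounts to persistence of $H^{2s}$-regularity for a flow constructed only in $H^s$; true there for other reasons, but not available in the abstract setting.)

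The standard way to close this step without any extra smoothness is to note that $W^n(t,s):=S\,U^n(t,s)\,S^{-1}$ is the propagator of $A^n+B^n$, hence solves the Volterra equation
\begin{equation*}
W^n(t,s)=U^n(t,s)+\int_s^t U^n(t,\tau)\,B^n(\tau)\,W^n(\tau,s)\,d\tau ,
\end{equation*}
whose Dyson iteration converges in $\La(X)$ uniformly in $n$ (the $k$-th term is bounded in norm by $e^{\beta(t-s)}(\lambda_1(t-s))^k/k!$, using (ii) and (vii)). Each term of the series converges strongly in $\La(X)$ as $n\to\infty$, being a composition of the uniformly bounded, strongly convergent families $U^n$ and $B^n$; only the already-established strong $\La(X)$ convergence of $U^n$ and hypothesis (vii) are needed. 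Hence $W^n\to W$ strongly in $\La(X)$ on all of $X$, and therefore $U^n=S^{-1}W^nS\to U$ strongly in $\La(Y)$, with no density argument in $Z$ at all. With that replacement, the remainder of your proof --- the uniform $\La(Y)$ stability bound, the treatment of the data terms via (viii), and the $C^1([0,T];X)$ convergence using the uniform bound on $\av{A^n(t)}_{Y,X}$ together with compactness of $\{u(t):t\in[0,T]\}$ in $Y$ --- goes through as written.
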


We now move to the analogue results for quasilinear equations of the form \eqref{eq:general}. For our purposes, it is sufficient to consider only the homogeneous case, when $f=0$, namely:
\begin{equation}\label{eq:quasilinear}
  \frac{du}{dt}= A(t,u)u, \quad 0\leq t\leq T, \quad u(0) \in W \subset Y \; .
\end{equation}
Here the set $W$ is a bounded subset of $Y$. Due to the nonlinearity, existence of the solutions is not guaranteed on the whole time interval $[0,T]$, but -- in general -- only for a smaller time $T'$, with $0 < T' \leq T$. The reason we restrict to a bounded subset $W$ is because we expect the time of existence to depend on the norm of the initial data. \\

\noindent
Following Kato, we make the following assumptions:
\begin{itemize}
  \item[(X)] $X$ is a reflexive Banach space and $Y\subset X$ is another reflexive Banach space,
continuously and densely embedded in $X$. There is an
isometric isomorphism $S$ of $Y$ into $X$. Moreover, we fix a ball $W\subset Y$ of radius $R$ and centered in $0$.
 \item[(A1)]There exists a positive $\beta$ such that $A(t,y) \in G(X,1,\beta)$, for all $t \in [0,T]$ and $y \in W$.
\item[(A2)]For any $t,y \in [0,T]\times W$, we have 
$$SA(t)S^{-1}-A(t)=B(t) \in \La(X),$$ 
and $\av{B(t)}_X \leq \lambda_1.$
\item[(A3)]For all $t,y \in [0,T]\times W$, we have $A(t) \in \La(Y,X)$. Fixed $y \in W$,  the function $t \mapsto A(t,y)$ is a
continuous operator-valued function, and
fixed $t \in [0,T]$, $y \to A(t,y)$ is Lipschitz continuous, in the sense that there exists a $\mu_1$ such that
$$\av{A(t,y)-A(t,z)}_{Y,X}\leq\mu_1\av{y-z}_X.$$
\end{itemize}

\begin{Thm}\label{thm:quasilinear}
Suppose conditions (X),(A1),(A2),(A3) are satisfied. Then, the quasilinear homogeneous Cauchy problem (\ref{eq:quasilinear})
has a unique solution 
$$u(t) \in C([0,T'];W) \cap C^1([0,T'];X),$$ 
for some $0<T'\leq T.$ In addition, $T'$ has a lower bound uniquely depending on $\beta,\lambda_1,\mu_1, R$ and monotonically decreasing
in each variable. 
\begin{proof}
See \cite{ka75} Theorem 6. 
\end{proof}
\end{Thm}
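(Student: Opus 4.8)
The plan is to reduce the quasilinear problem \eqref{eq:quasilinear} to the linear theory of Theorems \ref{thm:linear} and \ref{thm:linearpert} by a freezing-plus-iteration argument. First I would set up the linearization map: given $w \in C([0,T'];W)$, I consider the non-autonomous linear problem $\frac{du}{dt} = A(t,w(t))u$, $u(0)=\varphi$, and check that the coefficient $A_w(t):=A(t,w(t))$ inherits hypotheses (i)--(v) of Theorem \ref{thm:linear} from (X),(A1),(A2),(A3). Indeed (A1) gives (ii), (A2) gives (iii), and the continuity of $t\mapsto w(t)$ into $X$ together with the Lipschitz bound in (A3) gives (iv), while (i) and (v) are immediate (recall $f=0$). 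Theorem \ref{thm:linear} then produces a unique $u=\Phi(w)\in C([0,T];Y)\cap C^1([0,T];X)$, defining the solution map $\Phi$ whose fixed point will be the desired solution.

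The second step is the a priori invariance estimate. Applying the isometric isomorphism $S$ to the linearized equation and using (A2) to write $SA_wS^{-1}=A_w+B_w$ with $\av{B_w}_X\leq\lambda_1$, I see that $Su$ is governed by a generator in $G(X,1,\beta)$ perturbed by a bounded operator of norm $\leq\lambda_1$, hence, by bounded perturbation (cf.\ Example \ref{ex:multiplication}), by a generator in $G(X,1,\beta+\lambda_1)$. This yields $\av{u(t)}_Y=\av{Su(t)}_X\leq e^{(\beta+\lambda_1)t}\av{\varphi}_Y$. Choosing $T'$ small, depending only on $\beta,\lambda_1,R$, I can guarantee that $\Phi$ maps a fixed ball of functions valued in $W$ into itself, so that the iterates $u^{(k+1)}=\Phi(u^{(k)})$ remain uniformly bounded in $C([0,T'];Y)$.

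The third step is convergence of the iterates. For two frozen coefficients $w_1,w_2$ the difference $v=\Phi(w_1)-\Phi(w_2)$ solves $\frac{dv}{dt}=A(t,w_1)v+[A(t,w_1)-A(t,w_2)]\Phi(w_2)$, where the last term is a forcing term controlled in $X$ by $\mu_1 R\,\av{w_1-w_2}_X$ via (A3) and the $Y$-bound on $\Phi(w_2)$. A Gronwall estimate in the $X$-norm, using that the evolution generated by $A(t,w_1)$ has $X$-bound $e^{\beta(t-s)}$, then gives $\sup_{[0,T']}\av{v}_X\leq C(T')\sup_{[0,T']}\av{w_1-w_2}_X$ with $C(T')\to0$ as $T'\to0$. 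Hence for $T'$ small $\Phi$ is a contraction in the weaker norm $C([0,T'];X)$, and the iterates converge in $C([0,T'];X)$ to some limit $u$.

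The main obstacle --- and the reason hypothesis (X) asks for reflexivity --- is that boundedness of the iterates holds only in the stronger space $Y$, whereas contraction holds only in the weaker space $X$. To close the argument I would exploit reflexivity: the sequence $\{u^{(k)}\}$, bounded in $C([0,T'];Y)$, admits for each $t$ a weakly convergent subsequence in $Y$, and matching this with the strong $X$-limit identifies $u(t)\in W$ with $\av{u(t)}_Y$ controlled by the a priori bound of the second step. It then remains to verify that this limit genuinely solves \eqref{eq:quasilinear} with the asserted regularity $C([0,T'];W)\cap C^1([0,T'];X)$; here I would invoke the linear perturbation theorem (Theorem \ref{thm:linearpert}) with $A^n(t)=A(t,u^{(n)}(t))\to A(t,u(t))$ to pass to the limit in the linearized equations and recover strong time-continuity into $Y$. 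Uniqueness follows from the same $X$-norm Gronwall estimate applied to two putative solutions, and the lower bound on $T'$, monotonically decreasing in $\beta,\lambda_1,\mu_1,R$, is read off from the smallness conditions imposed in the second and third steps.
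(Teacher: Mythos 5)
Your proposal reconstructs, essentially faithfully, the argument of Kato's Theorem 6 that the paper cites in lieu of a proof: freeze the coefficient to define a solution map $\Phi$ via the linear theory, obtain the $Y$-bound $\av{u(t)}_Y\leq e^{(\beta+\lambda_1)t}\av{\varphi}_Y$ by conjugating with $S$ and using (A2), prove contraction of $\Phi$ in the weaker metric of $C([0,T'];X)$ using the Lipschitz bound of (A3), and use reflexivity of $Y$ (weak compactness of the bounded convex set $W$) to see that the $X$-limit of the iterates still takes values in $W$. This is exactly the intended architecture, and your reading of where each hypothesis enters --- in particular why (X) demands reflexivity --- is correct.

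One step at the end does not go through as written. You propose to recover the regularity $u\in C([0,T'];Y)\cap C^1([0,T'];X)$ by applying the linear perturbation Theorem \ref{thm:linearpert} with $A^n(t)=A(t,u^{(n)}(t))$. Hypothesis (vi) is indeed available, since (A3) gives $\av{A(t,u^{(n)}(t))-A(t,u(t))}_{Y,X}\leq\mu_1\av{u^{(n)}(t)-u(t)}_X\to0$. But hypothesis (vii) requires $B^n(t)\to B(t)$ in $\La(X)$, and under (X),(A1)--(A3) alone you only control the \emph{size} of $B(t,y)$ (by $\lambda_1$), not its dependence on $y$; that is precisely what the extra assumption (A4) of Theorem \ref{thm:quasilinearpert} is for, and even (A4) measures the difference in the $Y$-norm, whereas your iterates converge only in $X$. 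The standard (and simpler) conclusion avoids this entirely: the contraction estimate gives $\av{\Phi(u)-u^{(k+1)}}_{C([0,T'];X)}\leq\theta\av{u-u^{(k)}}_{C([0,T'];X)}\to0$, hence $\Phi(u)=u$; since $u(t)\in W$ and $t\mapsto u(t)$ is $X$-continuous, the frozen coefficient $A(t,u(t))$ satisfies the hypotheses of Theorem \ref{thm:linear}, and $u=\Phi(u)$ is by definition the solution of that linear problem, so it lies in $C([0,T'];Y)\cap C^1([0,T'];X)$ with no further limiting argument. With that repair your proof is complete; the uniqueness and the monotone lower bound on $T'$ follow from your Gronwall and smallness conditions as you state.
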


We now state the perturbation theorem in the case of quasilinear equations of type \eqref{eq:quasilinear}.
\begin{Thm}\label{thm:quasilinearpert}
In addition to the assumptions of Theorem \ref{thm:quasilinear}, consider the sequence of quasilinear homogeneous Cauchy problems
 \begin{equation}\label{eq:quasilinearpert}
 \frac{du^n}{dt}= A^n(t,u^n)u^n,\quad 0\leq t\leq T,\quad u^n(0) \in W,
\end{equation}
and assume that conditions (X), (A1), (A2) and (A3) of Theorem \ref{thm:quasilinear} are satisfied for every $n$,
with constants $\beta,\lambda_1,\mu_1$ independent on $n$. Moreover, suppose that
\begin{itemize}
\item[(A4)]$\av{B^n(t,y)-B^n(t,z)}_X \leq \mu_2\av{y-z}_Y$, uniformly in $n$.
 \item[(C1)]$A^n(t) \to A(t)$ strongly in $\La(Y,X)$.
\item[(C2)]$B^n(t) \to B(t)$ strongly in $\La(X)$.
\item[(C3)]$u^n(0) \in W$ and $u^n(0) \to u(0)$ in $Y$, as $\,\,n\mapsto\infty$.
\end{itemize}
Then, there exists a positive time $0<T''\leq T$, such that there is a unique solution 
$$u^n\in C([0,T''],W) \cap C^1([0,T''],X)$$
of (\ref{eq:quasilinearpert}), for every $n$, and a unique solution $u$ of (\ref{eq:quasilinear}) in the same class.
Moreover 
$$u^n(t) \to u(t)\qquad \text{in}\qquad C([0,T''],W) \cap C^1([0,T''],X),$$
as $\,\,n\mapsto\infty$. The time $T''$ has a lower bound uniquely depending on $\beta,\lambda_1,\mu_1,\mu_2, R,$ and monotonically decreasing
in each variable.
\end{Thm}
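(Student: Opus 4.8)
The plan is to reduce the quasilinear perturbation problem to the linear one, i.e. to Theorem~\ref{thm:linearpert}, by means of Kato's iteration scheme run uniformly in $n$. First I would fix a common time of existence: applying Theorem~\ref{thm:quasilinear} to each problem \eqref{eq:quasilinearpert} and to the limit problem \eqref{eq:quasilinear}, and using that the constants $\beta,\lambda_1,\mu_1,R$ are the same for all $n$ together with the \emph{monotonic} dependence of the existence time on these constants, I obtain a single $T''>0$ on which every $u^n$ and $u$ are defined and lie in $C([0,T''],W)\cap C^1([0,T''],X)$. The real difficulty is convergence, and the obstacle is structural: the coefficient $A^n(t,u^n(t))$ of the otherwise linear equation for $u^n$ depends on the unknown $u^n$ itself, so Theorem~\ref{thm:linearpert} cannot be invoked directly — one would need $A^n(t,u^n)\to A(t,u)$ and $B^n(t,u^n)\to B(t,u)$, and through (A4) the second of these requires the very $Y$-convergence we are trying to prove.

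The way around this circularity is to introduce the Kato iterates. For each $n$ set $u^n_0\equiv u^n(0)$ and let $u^n_{k+1}$ be the solution, furnished by Theorem~\ref{thm:linear}, of the genuinely linear Cauchy problem $\dot u^n_{k+1}=A^n(t,u^n_k(t))\,u^n_{k+1}$ with $u^n_{k+1}(0)=u^n(0)$ (the iterates remaining in $W$ for $t\in[0,T'']$); define $u_k$ analogously for the limit problem. I would then prove, by induction on $k$, that $u^n_k\to u_k$ in $C([0,T''],Y)\cap C^1([0,T''],X)$ as $n\to\infty$. The base case is (C3). For the inductive step the coefficients are $\tilde A^n(t)=A^n(t,u^n_k(t))$ and $\tilde A(t)=A(t,u_k(t))$, and I would verify hypotheses (vi)--(viii) of Theorem~\ref{thm:linearpert}: (vi) follows by writing $\tilde A^n-\tilde A=[A^n(t,u^n_k)-A^n(t,u_k)]+[A^n(t,u_k)-A(t,u_k)]$ and controlling the first bracket by the uniform Lipschitz bound (A3), $\av{\,\cdot\,}_{Y,X}\le\mu_1\av{u^n_k-u_k}_X$, which tends to $0$ by the inductive hypothesis, and the second by the strong convergence (C1); (vii) is handled the same way, the first bracket $[B^n(t,u^n_k)-B^n(t,u_k)]$ now being estimated by (A4), $\av{\,\cdot\,}_X\le\mu_2\av{u^n_k-u_k}_Y$, and the second by (C2). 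It is precisely here that the $Y$-Lipschitz condition (A4) is matched by the $Y$-convergence delivered by the inductive hypothesis, so the scheme closes without circularity; the requisite uniform bounds in (vi)--(vii) are guaranteed by the $n$-independent constants in (A2),(A3), and (viii) is again (C3) together with $f=0$. Theorem~\ref{thm:linearpert} then yields $u^n_{k+1}\to u_{k+1}$, completing the induction.

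It remains to exchange the limits in $k$ and $n$. I would show that the iterates converge to the solutions \emph{uniformly in} $n$, namely $\sup_n\av{u^n_k-u^n}_{C([0,T''],Y)}\to0$ as $k\to\infty$, and then conclude by the triangle inequality $\av{u^n-u}\le\av{u^n-u^n_k}+\av{u^n_k-u_k}+\av{u_k-u}$, choosing $k$ large uniformly in $n$ and afterwards $n$ large. I expect the main obstacle to be exactly this uniform convergence of the iteration: it forces one to revisit the proof of Theorem~\ref{thm:quasilinear} and to check that the geometric contraction rate governing $\av{u^n_{k+1}-u^n_k}$ there depends only on $\beta,\lambda_1,\mu_1,\mu_2,R$ and $T''$, hence is independent of $n$; and, more delicately, to upgrade the convergence of the iterates from $X$ to the strong $Y$-topology, which is where the reflexivity of $X,Y$ and the isometric character of $S$ assumed in (X) enter, through the standard weak-compactness argument (weak $Y$-convergence together with convergence of the norms $\av{u^n_k}_Y=\av{Su^n_k}_X$). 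Once $Y$-convergence of $u^n$ to $u$ is in hand, convergence in $C^1([0,T''],X)$ follows from the identity $\dot u^n-\dot u=A^n(t,u^n)(u^n-u)+[A^n(t,u^n)-A(t,u)]u$, estimated in $X$ via the uniform $\La(Y,X)$ bounds, (A3) and (C1), together with the $C^1([0,T''],X)$ convergence already supplied at each iteration level by Theorem~\ref{thm:linearpert}.
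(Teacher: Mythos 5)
The paper offers no self-contained proof of this statement: it is quoted verbatim from Kato, with the proof deferred to \cite{ka75}, Theorem 7. So your attempt has to be measured against Kato's argument, and there it has a genuine gap, located exactly at the step you yourself flag as the "main obstacle": the exchange of limits. Your induction on $k$ (convergence $u^n_k\to u_k$ for each fixed iterate level, via Theorem \ref{thm:linearpert}, with (A3) handling the $A$-bracket in the $X$-norm and (A4) matched against the inductive $Y$-convergence) is sound. But the uniform-in-$n$ statement $\sup_n\|u^n_k-u^n\|_{C([0,T''],Y)}\to 0$ as $k\to\infty$ is not obtainable from the hypotheses. The contraction underlying Theorem \ref{thm:quasilinear} is a contraction in the metric of $C([0,T'],X)$ only, on the set of $W$-valued functions (complete by reflexivity of $Y$); the iterates are never shown to converge in $Y$, and the $Y$-regularity of the fixed point is recovered \emph{a posteriori}, by applying the linear theory to the equation with coefficient frozen along the fixed point itself --- not as a $Y$-limit of the iterates. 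Nor can this be repaired: (A3) controls differences $A(t,y)-A(t,z)$ only in $\La(Y,X)$ by $X$-differences of the arguments, and nothing controls such differences in any norm that would let you estimate $u^n_{k+1}-u^n$ in $Y$ (the formal $S$-conjugated equation for the difference involves $A$ applied to elements of $X\setminus Y$, so it is not even well defined). Your fallback --- weak compactness plus "convergence of the norms $\|u^n_k\|_Y=\|Su^n_k\|_X$" --- is circular: weak-$Y$ convergence is free, but convergence of the $Y$-norms is precisely the missing content.

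The actual mechanism in Kato's proof is different and cannot be reduced to a qualitative limit-exchange. First, $X$-convergence: since the maps $\Phi^n$ (solve the linear problem with coefficient frozen along the argument) are contractions in the $X$-metric with a rate uniform in $n$, one writes $\|u^n-u\|_{C(X)}\le(1-\theta)^{-1}\|\Phi^n(u)-u\|_{C(X)}$ and applies the linear perturbation theorem with coefficients frozen along the \emph{limit} $u$ --- no circularity, since $u$ is a fixed function and (C1), (C2), (C3) apply directly. Second, $Y$-convergence: the linear machinery is applied to the equations with coefficients frozen along $u^n$; there hypothesis (vii) produces, through (A4), the term $\mu_2\sup_t\|u^n(t)-u(t)\|_Y$, and this term must be \emph{absorbed} quantitatively, by taking $T''$ small compared with $1/\mu_2$, rather than sent to zero by a prior convergence. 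This is visible in the statement itself: $T''$ depends on $\mu_2$ (monotonically decreasing), whereas the existence time of Theorem \ref{thm:quasilinear} does not; in your scheme $\mu_2$ never constrains $T''$, which is a sign that the $Y$-convergence difficulty has not actually been engaged.
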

\begin{proof}
See \cite{ka75}, Theorem 7. 
\end{proof}

\section{Continuity of solutions for generalised KdV equations}\label{sec:continuity}
Here we apply Kato's theory to study local-in-time solutions of the following family of Cauchy problems:
\begin{eqnarray}\label{eq:genkdv}
   \frac{du}{dt}= A(u;\,\e)u, \qquad u(0) \in H^s , \; \\ \nonumber
A(u;\,\e)=a(u)\partial_x+ \sum_{i=1}^n\e_i\partial_x^{2i+1}  \; ,
\end{eqnarray}
depending on a family of parameters $\e=\left(\e_1,\dots,\e_n\right)\in\mathbb{R}^{n}$.
Here $a(u)$ is a smooth function, the initial value $u(0)$ is independent on the $\e_i$ for all $i$, and
$s\geq 2n+1$. We call the above equations \emph{generalized KdV equations}. We are interested in the behaviour of the solutions of \eqref{eq:genkdv}
as the parameters $\e$ vary; in particular, we want to prove the continuity of the solutions with respect to $\e$ in a suitable Banach space.

To apply the results of the previous section we choose $X=L^{2}$ and $Y=H^{s}$.
Before proving the main result, we collect some known facts about Sobolev spaces (in one space dimension).

\begin{Lem}\label{lem:sobolevestimates}
 \begin{itemize}
\item[(0)]The map $\Lambda^s=(1-\partial_x^2)^{\frac{s}{2}}$ is an isometric isomorphism of $H^s, s \geq 0$ into $L^2$.
The inverse of $\Lambda^s$ is $\Lambda^{-s}$.
  \item[(i)] Sobolev embedding (particular case): if $u \in H^s, s>\frac{1}{2}+n$, then $u$ is
n-times differentiable and
 there exists a constant $c$ such that
 \begin{equation}\label{eq:sobolevemb}
\av{\partial_x^n u}_{L^{\infty}} \leq c\,\avs{u}{s}  \; .
\end{equation}
\item[(ii)]Algebra property: if $u,v \in H^s$, $s>\frac{1}{2}$, then there exists a constant $c(s)$ such that
\begin{equation}\label{eq:algebra}
 \avs{u\,v}{s} \leq c(s) \avs{u}{s} \avs{v}{s}  \; .
\end{equation}
\item[(iii)]Schauder estimate: if $a:\mathbb{R} \to \mathbb{R}$ is a smooth function and $s>\frac{1}{2}$, then there exists a
constant $c(s,a,\avs{u}{s},\avs{v}{s})$ such that
\begin{equation}\label{eq:schauder}
 \avs{a(u)-a(v)}{s} \leq c(s,a,\avs{u}{s},\avs{v}{s})\avs{u-v}{s} \; .
\end{equation}
  \item[(iv)]Commutator estimates: let $u \in H^s, s>\frac{3}{2}$. Then the operator $T_u=(\Lambda^su\partial_x - u\partial_x\Lambda^s)
\Lambda^{-s}$ is bounded on $L^2$ and there exists a constant $c(s)$ such that
\begin{equation}\label{eq:commutator}
 \av{T_u}_{L^2} \leq c(s) \avs{u}{s}  \; .
\end{equation}
\end{itemize}
\end{Lem}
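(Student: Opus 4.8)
The plan is to reduce every item to the Fourier characterization $\avs{u}{s}^2=\int_{\bb{R}}(1+\xi^2)^s\norm{\hat u(\xi)}^2\,d\xi$ of the Sobolev norm, leaving the genuinely delicate commutator bound (iv) for last. For (0) the operator $\Lambda^s$ acts on the Fourier side as multiplication by the positive symbol $(1+\xi^2)^{s/2}$, so $\av{\Lambda^s u}_{L^2}=\avs{u}{s}$, which is the isometry statement; since the reciprocal symbol is $(1+\xi^2)^{-s/2}$, the operator $\Lambda^{-s}$ is a two-sided inverse. For (i) I would apply Cauchy--Schwarz to the inversion formula $\partial_x^n u(x)=(2\pi)^{-1/2}\int(i\xi)^n\hat u(\xi)e^{ix\xi}\,d\xi$ after inserting $(1+\xi^2)^{s/2}(1+\xi^2)^{-s/2}$; the embedding constant is then $\big(\int\norm{\xi}^{2n}(1+\xi^2)^{-s}\,d\xi\big)^{1/2}$, finite exactly when $2(s-n)>1$, i.e.\ $s>n+\tfrac12$.

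For the algebra property (ii) I pass to the convolution $\widehat{uv}=\hat u*\hat v$ and use the elementary subadditivity $(1+\xi^2)^{s/2}\le C_s\big((1+\eta^2)^{s/2}+(1+(\xi-\eta)^2)^{s/2}\big)$ to move the weight onto one factor at a time; estimating the two resulting pieces by Young's inequality, together with $(1+\xi^2)^{-s/2}\in L^2(\bb{R})$ for $s>\tfrac12$, yields $\avs{uv}{s}\le c(s)\avs{u}{s}\avs{v}{s}$. The Schauder estimate (iii) then follows by writing $a(u)-a(v)=(u-v)\int_0^1 a'\big(v+\tau(u-v)\big)\,d\tau$, applying (ii) to this product, and bounding $\avs{a'(v+\tau(u-v))}{s}$ uniformly in $\tau\in[0,1]$ by a Moser-type estimate, whose constant depends on $a$ and on $\avs{u}{s},\avs{v}{s}$ through the embedding (i).

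The hard part will be (iv). First I would rewrite $T_u=[\Lambda^s,u\partial_x]\Lambda^{-s}$ and isolate the mechanism: although $u\partial_x$ loses one derivative and $\Lambda^s$ costs $s$ further derivatives, the commutator recovers exactly one derivative, so that $[\Lambda^s,u\partial_x]$ is bounded from $H^s$ into $L^2$ and hence $T_u$ is bounded on $L^2$. Quantitatively this is the Kato--Ponce commutator estimate. On the Fourier side the commutator kernel carries the symbol difference $(1+\xi^2)^{s/2}-(1+\eta^2)^{s/2}$, which by the mean value theorem is dominated by $c\,\norm{\xi-\eta}\,(1+\norm{\xi}+\norm{\eta})^{s-1}$; this is one power of frequency less than a naive count predicts, and that missing power is precisely the recovered derivative. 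Combining this bound with the $(1+\eta^2)^{-s/2}$ coming from $\Lambda^{-s}$ and the factor $\eta$ from $\partial_x$, the operator reduces essentially to convolution against $\partial_x u$ plus lower-order terms, and a Schur/Young estimate closes the bound by $\av{\partial_x u}_{L^\infty}+\avs{u}{s}$, which for $s>\tfrac32$ is controlled by $\avs{u}{s}$ via (i). I expect the only point requiring real care to be the bookkeeping of this frequency decomposition (equivalently a high--high/high--low/low--high paraproduct splitting); in one space dimension the resulting inequality is classical, so in a complete write-up I would cite the Kato--Ponce estimate rather than reproduce it in full.
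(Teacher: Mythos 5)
Your proposal is correct in substance; note that the paper offers no argument of its own here, proving the lemma entirely by citation (Kato \cite{ka75}, Lemma A.2 for item (iv), and Tao's appendix for the rest), so your Fourier-side sketches are in effect reconstructions of the standard proofs behind those references rather than an alternative to anything in the paper. Items (0), (i) and (ii) are exactly the classical arguments (Plancherel; Cauchy--Schwarz against the weight $(1+\xi^2)^{-s/2}$; Peetre-type splitting of the weight in the convolution plus Young's inequality), and your treatment of (iv) via the mean-value bound on the symbol difference, i.e.\ the Kato--Ponce commutator estimate, is precisely the mechanism of Kato's Lemma A.2; writing $T_u=[\Lambda^s,u]\partial_x\Lambda^{-s}$ and feeding $\av{[\Lambda^s,u]f}_{L^2}\leq c\big(\av{u_x}_{L^\infty}\avs{f}{s-1}+\avs{u}{s}\av{f}_{L^\infty}\big)$ the function $f=\partial_x\Lambda^{-s}v$ closes the bound for $s>\tfrac32$, as you say. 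The one step that needs a word of care is (iii): as written, $\avs{a'\big(v+\tau(u-v)\big)}{s}$ is infinite whenever $a'(0)\neq 0$, since a function with a nonzero limit at infinity is not in $H^s$. The standard repair is to split $a'(w)=a'(0)+\big(a'(w)-a'(0)\big)$, absorb the constant term trivially into $c\,\avs{u-v}{s}$, and apply the Moser composition estimate (which requires the outer function to vanish at the origin) only to the second piece; with that adjustment your argument for (iii) goes through and yields the stated dependence of the constant on $a$, $\avs{u}{s}$ and $\avs{v}{s}$.
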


\begin{proof}
For $(iv)$ see \cite{ka75}, Lemma A.2. For all other statements, see \cite{tao2006} Appendix A.
\end{proof}

We prove the following
\begin{Thm}\label{thm:localgenkdv}
Let $W$ be a ball of radius $R$
in $H^s$, such that $u(0) \in W$.
There is a time $T$ such that for any $\e \in \bb{R}^n$ the Cauchy problem (\ref{eq:genkdv}) has a unique solution
$$u(t;\e) \in C([0,T],W)\cap C^1([0,T],H^{s-(2n+1)}).$$ 
The map $\e \mapsto u(t;\e)$ from $\bb{R}^n$ to $C([0,T],W)\cap C^1([0,T],H^{s-(2n+1)}) $ is continuous.

\begin{proof}
As a first step we prove that $u(t;\e)$ exists and is continuous as a map from $\bb{R}^n$
to $ C([0,T],H^s)\cap C^1([0,T],L^2)$.
To prove this, it is sufficient to show that all conditions of Theorems \ref{thm:quasilinear} and \ref{thm:quasilinearpert}
are satisfied uniformly in $\bb{R}^{n}$, assuming $X=L^2(\bb{R})$ and $Y=H^s(\bb{R})$.

\begin{itemize}
 \item[(X)]It is trivially satisfied since $X$, $Y$ are Hilbert spaces. The required isometry can be chosen to be $\Lambda^s$. 
\item[(A1)]Following Example \ref{ex:fd} above, we see that $A(u;\,\e) \in G(X,1,\beta(R))$,
with $$\beta(R)=\frac{1}{2}\sup_{u \in W}\sup_{x \in \bb{R}}\norm{\partial_xa(u)}\leq \frac{1}{2} R \sup_{\norm{x}\leq R}a'(x)\;.$$
\item[(A2)]Since $\Lambda^s$ commutes with the derivative operator, we have 
$$B(u,\e):=(\Lambda^s A(\e) - A(\e)\Lambda^s)\Lambda^{-s}=T_{a(u)},$$ 
where $T$ is defined as in Lemma \ref{lem:sobolevestimates} (iv). Due to the commutator estimate and the Schauder estimate,
we have that
$$\av{B(u;\e)}_{L^2(\bb{R})} \leq c(s,a,R) R = \lambda_1(R) \; .$$
\item[(A3)]$A(u;\e)$ is a continuous operator from $H^s$ to $H^l$, for $0\leq l\leq s-(2n+1)$.
Indeed, $ \sum_{i=1}^n\e_i\partial_x^{2i+1}$ is continuous from $H^s$ to $H^l$ and
$a(u)\partial_x $ is continuous from $H^s$ to $H^{s-1}$ due to the Schauder estimate and the algebra
property of Sobolev spaces. A simple computation shows that $$\av{(A(u;\,\e)-A(v;\,\e))}_{H^{S},L^2}\leq
c(s) \sup_{\norm{x}\leq R}\norm{a'(x)}\av{u-v}_{L^2} ,$$
for some constant $c(s)$ depending on $s$ only.
We choose
$$\mu_1(R)=c(s) \sup_{\norm{x}\leq R}\norm{a'(x)}.$$
\item[(A4)]The same reasoning as in (A2) above, shows that property $A(4)$ is satisfied with $\mu_2(R)=c(s,a,R)$
for some constant $c(s,a,R)$.
\item[(C1)]The operator $A(u;\e)$ depends continuously (in norm) on $\e$. Indeed, if $\e'=(\e'_{1},\dots,\e'_{n})$, we have
$$\av{(A(u;\,\e)-A(u;\,\e'))}_{H^{S},L^2}\leq \sum_{i=1}^{n}\norm{\e_i-\e'_i} .$$
\item[(C2)]The operator $B(u;\,\e)$ does not depend on $\e$.
\item[(C3)]The initial data do not depend on $\e$.
\end{itemize}
We have thus proved that there exists a $T> 0$ such that $u(t;\e)$ exists and is continuous as a map from $\bb{R}^n$
to $ C([0,T],H^s)\cap C^1([0,T],L^2),$ we want to prove that it is continuous also as a map from $\bb{R}^n$ to $ C([0,T],H^s)\cap C^1([0,T],H^{s-(2n+1)}).$
Now the time derivative of $u$ satisfies
$$u_t(t;\e)=A(u(t);\,\e)u(t;\,\e),$$ and
$A(u(t);\e)$, for fixed $t,\e$ is a continuous operator from $H^s$ to $H^{s-(2n+1)}$. To complete the proof, it is enough to prove that
the map  
$$
A(\,,\,): W \times \bb{R}^n \to \La(H^s,H^{s-(2n+1)})
$$
is continuous. Indeed,
\begin{eqnarray*}
\avs{A(v,\e')y-A(u,\e)y}{s-(2n+1)} \!\! & \!\! \leq \!\! & \!\! \avs{a(v)\partial_x y-a(u)\partial_xy}{s-(2n+1)}+ \\
& \!\!& \!\! \avs{\sum_{i=1}^n(\e_i'-\e_i)\partial^{2i+1}_x y}{s-(2n+1)} .     
\end{eqnarray*}
By Cauchy-Schwarz and the Schauder estimate we have 
$$\avs{a(v)\partial_x y-a(u)\partial_xy}{s-(2n+1)} \leq c(s,a,R) \avs{u-v}{s} \avs{y}{s} .$$
Moreover $\avs{\sum_{i=1}^n(\e_i'-\e_i)\partial^{2i+1}_x y}{s-(2n+1)}\leq  \sum_{i=1}^n\norm{\e_i'-\e_i}\avs{y}{s}$.
\end{proof}
\end{Thm}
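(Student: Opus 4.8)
The plan is to obtain existence, uniqueness, and continuous dependence on $\e$ all at once from Kato's machinery, by verifying that the family $A(u;\e)=a(u)\partial_x+\sum_{i=1}^n\e_i\partial_x^{2i+1}$ meets the hypotheses of Theorems \ref{thm:quasilinear} and \ref{thm:quasilinearpert} with the choices $X=L^2$, $Y=H^s$, and isometric isomorphism $S=\Lambda^s$. The decisive structural observation I would exploit is that the $\e$-dependence is carried entirely by the constant-coefficient, anti-self-adjoint operator $\sum_i\e_i\partial_x^{2i+1}$: by Example \ref{ex:d3} this piece generates a unitary group (hence contractions), it commutes with $\Lambda^s$, and it is independent of $u$. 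I therefore expect every generation and Lipschitz constant $\beta,\lambda_1,\mu_1,\mu_2$ to depend only on $R$ and on $a$, and to be \emph{uniform in} $\e\in\bb{R}^n$ — which is exactly what yields a single existence time $T$ valid for all $\e$.

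Concretely I would check the conditions in turn. For (A1), writing $A(u;\e)=a(u)\partial_x+D^{2n+1}_\e$, the transport part is covered by Example \ref{ex:fd}, giving $a(u)\partial_x\in G(L^2,1,\beta)$ with $\beta=\tfrac12\sup_x|\partial_x a(u)|$ controlled over $W$ by the Sobolev embedding, Lemma \ref{lem:sobolevestimates}(i); adding the anti-self-adjoint dispersive term leaves the generation constant untouched, so $A(u;\e)\in G(L^2,1,\beta(R))$ for every $\e$. For (A2), since $\Lambda^s$ commutes with each $\partial_x^{2i+1}$, the conjugation $B=(\Lambda^s A\Lambda^{-s}-A)$ collapses to the commutator $T_{a(u)}$ of Lemma \ref{lem:sobolevestimates}(iv), whose $L^2$-norm I bound by $c(s)\avs{a(u)}{s}$ and then via the Schauder estimate (iii) to produce $\lambda_1(R)$. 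Condition (A3) is where $s\ge 2n+1$ is used: the top-order term $\partial_x^{2n+1}$ maps $H^s$ into $H^{s-(2n+1)}\subset L^2$ precisely because $s-(2n+1)\ge 0$, so $A(u;\e)\in\La(H^s,L^2)$, while the Lipschitz-in-$u$ estimate comes from the Schauder and algebra properties. The remaining hypotheses are routine: (A4) repeats the (A2) computation, (C1) follows from the linear, constant-coefficient $\e$-dependence (yielding $\av{A(u;\e)-A(u;\e')}_{H^s,L^2}\le\sum_i|\e_i-\e_i'|$), and (C2),(C3) are immediate since neither $B$ nor the initial data depend on $\e$.

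With every hypothesis verified uniformly in $\e$, Theorems \ref{thm:quasilinear} and \ref{thm:quasilinearpert} together deliver a common time $T>0$, a unique solution $u(\cdot;\e)\in C([0,T],W)\cap C^1([0,T],L^2)$ for each $\e$, and continuity of $\e\mapsto u(\cdot;\e)$ into that space. The last step is a bootstrap upgrading the temporal $C^1$-regularity from $L^2$ to $H^{s-(2n+1)}$: since $u_t=A(u;\e)u$ and the map $A(\cdot,\cdot):W\times\bb{R}^n\to\La(H^s,H^{s-(2n+1)})$ is continuous — by the Schauder estimate for the transport part and by boundedness of the linear $\e$-dependence for the dispersive part — composing with the already-established continuity of $u$ in $(t,\e)$ shows that $u_t\in C([0,T],H^{s-(2n+1)})$ depends continuously on $\e$.

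I expect the main obstacle to be the uniformity in $\e$ over the unbounded set $\bb{R}^n$: a priori one fears the dispersive coefficients entering the estimates and blowing up as $|\e|\to\infty$. The resolution, and the heart of the argument, is the anti-self-adjoint, $\Lambda^s$-commuting structure of $D^{2n+1}_\e$, which quarantines all $\e$-dependence away from $\beta$, $\lambda_1$, and $\mu_1,\mu_2$. Verifying (A1) — in particular that adjoining an anti-self-adjoint, unitary-group-generating perturbation does not degrade the contraction estimate furnished by Example \ref{ex:fd} — is the technical crux on which everything else rests.
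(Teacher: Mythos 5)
Your proposal is correct and follows essentially the same route as the paper: the same choices $X=L^2$, $Y=H^s$, $S=\Lambda^s$, the same verification of (X), (A1)--(A4), (C1)--(C3) via Examples \ref{ex:fd} and the commutator/Schauder estimates of Lemma \ref{lem:sobolevestimates}, and the same final bootstrap upgrading $C^1([0,T],L^2)$ to $C^1([0,T],H^{s-(2n+1)})$ by continuity of $A(\cdot,\cdot):W\times\bb{R}^n\to\La(H^s,H^{s-(2n+1)})$. Your emphasis on the anti-self-adjoint, $\Lambda^s$-commuting dispersive term as the reason all constants are uniform in $\e$ is exactly the mechanism the paper relies on.
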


\begin{Rem}
Following \cite{ka83}, it is possible to prove a slightly stronger version of this theorem. Indeed, we can prove that
the Cauchy problem \eqref{eq:genkdv} is uniformly locally well-posed in $H^s$ with $s>\frac{3}{2}$. However, this stronger
result is unnecessary for the purpose of studying the derivatives (with respect to $\e_i$) of the solution of the Cauchy problem \eqref{eq:genkdv}. 
\end{Rem}

Theorem \ref{thm:localgenkdv} establishes that, fixed the nonlinearity $a(u)$ and the initial datum $\varphi \in H^s$,
there exists a time $T>0$ such that the Cauchy problem (\ref{eq:genkdv}) is locally well-posed in the time interval $[0,T]$,
continuously with respect to $\e \in \bb{R}^n$. In particular the life-span of the solution can be chosen independently on
$\e$.\\

The natural problem is to find the supremum of all the positive times such that the Cauchy problem is
locally well-posed and continuous with respect to $\e \in U$ for some open $U \subset \bb{R}^n$. We denote this time $T_U$. Suppose that for some $a(u)$ and some $\varphi$, there exists a $U\subset \bb{R}^n$, such that
the Cauchy problem is globally well-posed for all $\e \in U$. In this case, it follows that $T_U = \infty$. For example, if $n=1$, $\e_1\neq0$, and $lim_{c\to \infty}\frac{a(c)}{c^4}=0$, then the Cauchy problem is globally
well-posed in $H^s$, for  $s > \frac{3}{2}$ \cite{ka83}.
On the other hand, if for instance $a(u) =u^4$, some solutions do blow-up at a finite time \cite{martel2004}.\\

Since the general pattern is unknown, here we analyze the KdV Cauchy problem
\begin{equation}\label{eq:kdv}
 u_t=uu_x+\e u_{xxx}, \quad u|_{t=0}=\varphi \in H^s, \quad s\geq 3.
\end{equation}
If $\eps =0$, the solution of above Cauchy problem
develops a gradient catastrophe singularity at a finite time $t=t_c>0$.
Here $t_c$ coincides with the supremum of the positive time $t$ for which the solution of
the Cauchy problem can be continued.
Conversely, if $\e \neq 0$ the Cauchy problem is globally well-posed \cite{ka83}. \\

We now show that the solution of the Cauchy problem of KdV is
continuous with respect to $\e$ in any time interval $[0,T]$, with $T$ strictly smaller than the critical time $t_{c}$.
This is a simple
corollary of Theorem \ref{thm:localgenkdv}:

\begin{Thm}\label{thm:kdvcontinuity}
Let $T$ be any positive time smaller than the critical time $T<t_c$ and
let $u(t;\e) \in C([0,T],H^s) \cap C^1([0,T], H^{s-3})$ be the unique solution of the KdV Cauchy problem \eqref{eq:kdv}.
Then $\e \to u(t;\e)$ is a continuous map from $\bb{R}$ to $C([0,T],H^s) \cap C^1([0,T], H^{s-3})$.

\begin{proof}
The proof follows from Theorem \ref{thm:localgenkdv} and a standard continuation argument.
\end{proof}
\end{Thm}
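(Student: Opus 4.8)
The plan is to upgrade the local-in-time continuity furnished by Theorem~\ref{thm:localgenkdv} to continuity on the whole interval $[0,T]$ by a continuation argument, using the perturbation result Theorem~\ref{thm:quasilinearpert} to restart the evolution at intermediate times. Fix $\e_0\in\bb{R}$; the goal is continuity of $\e\mapsto u(\,\cdot\,;\e)$ at $\e_0$. The crucial observation is that for $T<t_c$ the reference solution stays bounded: when $\e_0=0$ the hypothesis $T<t_c$ guarantees $M:=\sup_{t\in[0,T]}\avs{u(t;\e_0)}{s}<\infty$, while for $\e_0\neq 0$ the Cauchy problem \eqref{eq:kdv} is globally well-posed \cite{ka83}, so the same supremum over the compact interval $[0,T]$ is again finite. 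This uniform bound $M$ is precisely what will make the continuation terminate after finitely many steps.

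First I would set up the restart mechanism. Suppose continuity at $\e_0$ has already been established on $[0,\tau]$ for some $\tau<T$, i.e. $u(\,\cdot\,;\e)\to u(\,\cdot\,;\e_0)$ in $C([0,\tau],H^s)\cap C^1([0,\tau],H^{s-3})$ as $\e\to\e_0$. Then in particular $u(\tau;\e)\to u(\tau;\e_0)$ in $H^s$, so for $\e$ near $\e_0$ the value $u(\tau;\e)$ lies in the ball $W'$ of radius $M+1$ centred at $0$ in $H^s$. I would now apply Theorem~\ref{thm:quasilinearpert} to the KdV problems on the shifted time interval, with the $\e$-dependent initial data $u(\tau;\e)$: conditions (A1)--(A4) and (C1)--(C2) are verified exactly as in the proof of Theorem~\ref{thm:localgenkdv} (with constants depending only on $R=M+1$), while (C3) is precisely the convergence $u(\tau;\e)\to u(\tau;\e_0)$ just noted. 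The theorem then yields a time $\delta>0$, depending only on $M+1$ and the fixed structural constants and hence independent of $\tau$, such that on $[\tau,\tau+\delta]$ the solutions exist, remain in $W'$, and converge in $C([\tau,\tau+\delta],H^s)\cap C^1([\tau,\tau+\delta],H^{s-3})$. Gluing this with the already-established convergence on $[0,\tau]$ at the common endpoint extends continuity to $[0,\tau+\delta]$.

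With this step in hand I would run the bootstrap. Theorem~\ref{thm:localgenkdv}, applied with the ball $W'$ of radius $M+1$ (which already contains $\varphi=u(0;\e_0)$), provides the base case on some $[0,\delta_0]$ with $\delta_0>0$. Because both the base time and the increment produced at each restart depend only on $M+1$ and the fixed constants, they are bounded below by a single positive constant — this is the decisive consequence of having frozen $T<t_c$, which confines the reference solution to a fixed ball of radius $M$ throughout $[0,T]$. Hence the interval $[0,T]$ is exhausted after finitely many applications of the restart step; choosing a neighbourhood of $\e_0$ small enough to work simultaneously for all these (finitely many) steps, continuity at $\e_0$ on all of $[0,T]$ follows. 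Since $\e_0\in\bb{R}$ was arbitrary, the map $\e\mapsto u(\,\cdot\,;\e)$ is continuous.

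I expect the main obstacle to be exactly this uniform lower bound on the step length: a priori the local existence-and-continuity time furnished by Kato's theory shrinks as the $H^s$-norm of the (restarted) data grows, and near the gradient catastrophe $t_c$ that norm is unbounded. The entire argument hinges on the restriction $T<t_c$, which keeps $M$ finite and thereby keeps the step length away from $0$. A secondary technical point is that the restarted data $u(\tau;\e)$ genuinely depends on $\e$, so one cannot reuse Theorem~\ref{thm:localgenkdv} directly at each step; it is the $\e$-dependent initial condition (C3) in the perturbation theorem Theorem~\ref{thm:quasilinearpert} that makes the gluing legitimate.
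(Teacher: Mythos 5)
Your proposal is correct and is exactly the ``standard continuation argument'' that the paper invokes without detail: you use Theorem~\ref{thm:localgenkdv} (via the perturbation Theorem~\ref{thm:quasilinearpert} to handle the $\e$-dependent restarted data) together with the key observation that $T<t_c$ keeps the reference solution in a fixed $H^s$-ball, so Kato's lower bound on the existence-and-continuity time is uniform and finitely many restarts exhaust $[0,T]$. This matches the paper's intended proof, merely written out in full.
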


\begin{Cor}\label{cor:continuityhopf}
Let $s\geq 3$, $T$ be any positive time smaller than the critical time $T<t_c$ and
let $u(t;\e)$ be the unique solution of the KdV Cauchy problem \eqref{eq:kdv}.
Then 
\begin{equation}\label{eq:continuityhopf}
 lim_{\e \to 0}\avs{u(t;\e) -v^0(t)}{s}, \mbox{ uniformly in } t \in [0,T].
\end{equation}
Here, $v^0$ is the unique solution of the Cauchy problem for the Hopf equation
$$v^{0}_{t}=v^{0}v^{0}_{x},\qquad  v^{0}|_{t=0}=\varphi.$$
\end{Cor}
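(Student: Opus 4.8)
The plan is to deduce Corollary~\ref{cor:continuityhopf} directly from Theorem~\ref{thm:kdvcontinuity} by recognizing the Hopf equation as the $\e=0$ member of the KdV family. First I would observe that the Hopf Cauchy problem
$$v^0_t = v^0 v^0_x, \qquad v^0|_{t=0}=\varphi$$
is precisely the equation \eqref{eq:kdv} with $\e=0$, which fits the quasilinear framework of Theorem~\ref{thm:localgenkdv} with $a(u)=u$, $n=1$, and $X=L^2$, $Y=H^s$. Since $s\geq 3 = 2n+1$, Theorem~\ref{thm:localgenkdv} guarantees a unique solution $v^0 \in C([0,T],H^s)\cap C^1([0,T],H^{s-3})$ on any interval $[0,T]$ with $T<t_c$, and this solution coincides with $u(t;0)$, the $\e=0$ member of the KdV family. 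The existence of this common life-span up to $t_c$ is exactly what the continuation argument in Theorem~\ref{thm:kdvcontinuity} secures.

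Next I would invoke the continuity statement of Theorem~\ref{thm:kdvcontinuity}: the map $\e \mapsto u(t;\e)$ is continuous from $\bb{R}$ into the Banach space $C([0,T],H^s)\cap C^1([0,T],H^{s-3})$, equipped with its natural norm. Continuity at the point $\e=0$ means in particular that
$$\lim_{\e\to 0}\ \sup_{t\in[0,T]} \avs{u(t;\e)-u(t;0)}{s} = 0.$$
Identifying $u(t;0)=v^0(t)$ then yields precisely the claimed uniform convergence \eqref{eq:continuityhopf}. The key point is that convergence in the topology of $C([0,T],H^s)$ is by definition uniform in $t$ with respect to the $H^s$ norm, so no extra work is needed to upgrade pointwise-in-$t$ convergence to uniform convergence.

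I anticipate that the only genuinely substantive issue — and the one I would want to state carefully — is the identification of $u(t;0)$ with the Hopf solution $v^0$, together with the verification that the solution remains in the ball $W$ of radius $R$ throughout $[0,T]$ so that the uniform-in-$\e$ local theory applies. Strictly, Theorem~\ref{thm:localgenkdv} produces solutions in a fixed ball $W$ on a short time $T$ depending only on $R$; extending this to the full interval $[0,T]$ with $T<t_c$ requires the a~priori boundedness of the Hopf solution in $H^s$ up to $t_c$, which is standard for quasilinear first-order equations before the gradient catastrophe and is what the continuation argument exploits. Everything else is a formal unwinding of definitions, so this corollary is essentially immediate once the family is set up with $a(u)=u$.
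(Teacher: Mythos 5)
Your proposal is correct and follows exactly the route the paper intends (the paper leaves the corollary without an explicit proof precisely because it is the specialization of Theorem~\ref{thm:kdvcontinuity} to continuity at $\e=0$, combined with the identification $u(t;0)=v^0(t)$ and the observation that convergence in $C([0,T],H^s)$ is by definition uniform in $t$). Your added remarks on the continuation argument and the a~priori $H^s$ bound of the Hopf solution up to $t_c$ correctly identify where the only real content lies, and match the role played by Theorem~\ref{thm:kdvcontinuity} in the paper.
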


\section{The $\e$-expansion of KdV solutions}\label{sec:kdvderivatives}
This section is devoted to study the differentiability of solutions of equation \eqref{eq:genkdv} with respect to $\e$.
For simplicity, we consider in detail the case of KdV only; as explained in Theorem \ref{thm:genmain} below, there is no trouble in extending the results to the whole class \eqref{eq:genkdv}.\\

We show that if $s \geq 3N+3$, then the solution of the Cauchy problem of KdV (\ref{eq:kdv}) is
$N$-times differentiable with respect to $\e$ in any time interval $[0,T]$, with $T$ strictly smaller than the critical time $t_{c}$. Before our main theorem, 
state a technical lemma.

\begin{Lem}\label{lem:A}
Let  $f:  \bb{R} \to C([0,T],H^{\frac{1}{2}})$ and $g:\bb{R} \to C([0,T],H^{\frac{3}{2}})$ be two functions,
and take $X=L^2$, $Y=H^s$, with $s\geq 3$. Then, the family of linear operators
\begin{equation}\label{eq:A}
 A(\e_1,\e_2,\e_3)=f(\e_1)\partial_x+g(\e_2)+\e_3\partial^3_x
\end{equation}
satisfy the conditions (i) through (iv) of Theorem \ref{thm:linear}.
Moreover, if $(\e_1^n,\e_2^n,\e_3^n) \to (\e_1,\e_2,\e_3)$ is any converging sequence, then the sequence of operators $A^n=A(\e_1^n,\e_2^n,\e_3^n)$ satisfy conditions (vi) and (vii) of Theorem \ref{thm:linearpert}.
\end{Lem}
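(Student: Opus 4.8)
The statement is a verification of the hypotheses of the linear theorems \ref{thm:linear} and \ref{thm:linearpert}, entirely parallel to the verification carried out in the proof of Theorem \ref{thm:localgenkdv}; the only differences are that the variable coefficient $a(u)$ is replaced by the prescribed functions $f(\e_1)$ and $g(\e_2)$, and that a genuine zeroth-order term $g(\e_2)$ is now present. The plan is to take $X=L^2$, $Y=H^s$ with $s\ge 3$, and $S=\Lambda^s$, which by Lemma \ref{lem:sobolevestimates}(0) is an isometric isomorphism of $H^s$ onto $L^2$, and then to check (i)--(iv) of Theorem \ref{thm:linear} at each fixed $t\in[0,T]$ with constants uniform in $t$, and (vi),(vii) of Theorem \ref{thm:linearpert} along an arbitrary convergent sequence of parameters.

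Condition (i) is immediate. For the generation property (ii) I would split $A=\e_3\partial^3_x+f(\e_1)\partial_x+g(\e_2)$ and treat the summands separately: $\e_3\partial^3_x$ is anti-self-adjoint, hence in $G(L^2,1,0)$ by Example \ref{ex:d3}; the transport term $f(\e_1)\partial_x$ generates a $C_0$-semigroup of contractions up to a constant and is relatively bounded with respect to $\partial^3_x$ with arbitrarily small relative bound, as in Example \ref{ex:fd}; and $g(\e_2)$ is a bounded multiplication operator, which by Example \ref{ex:multiplication} only shifts the constant. Combining these through Theorem \ref{thm:perturbation} yields $A\in G(L^2,1,\beta)$, with $\beta$ controlled by $\av{\partial_xf(\e_1)}_{L^\infty}$ and $\av{g(\e_2)}_{L^\infty}$. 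For (iv) one checks that $A\in\La(H^s,L^2)$, with norm bounded in terms of $\av{f(\e_1)}_{L^\infty}$, $\av{g(\e_2)}_{L^\infty}$ and $|\e_3|$, and that $t\mapsto A(t)$ is continuous, which follows from the continuity of $f$ and $g$ into their target Sobolev spaces.

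The decisive condition is (iii). Since $\Lambda^s$ commutes with the constant-coefficient operator $\partial^3_x$, the dispersive term drops out of the conjugation and
\[
B=\Lambda^sA\Lambda^{-s}-A=T_{f(\e_1)}+[\Lambda^s,g(\e_2)]\Lambda^{-s},
\]
where $T_{f(\e_1)}$ is the operator of Lemma \ref{lem:sobolevestimates}(iv). Provided the coefficients are controlled in $H^s$, the Kato commutator estimate there gives $\av{B}_{L^2}\le c(s)\big(\avs{f(\e_1)}{s}+\avs{g(\e_2)}{s}\big)$, and the continuity $t\mapsto B(t)$ follows from that of $f,g$. Conditions (vi),(vii) are then obtained by applying the same estimates to differences: if $(\e_1^n,\e_2^n,\e_3^n)\to(\e_1,\e_2,\e_3)$, the continuity of $f(\cdot),g(\cdot)$ gives $f(\e_1^n)\to f(\e_1)$ and $g(\e_2^n)\to g(\e_2)$ uniformly in $t$. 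Testing $(A^n-A)y$ on a fixed $y\in H^s$ and estimating termwise (using $\avs{y}{s}$ to dominate $\av{y_x}_{L^\infty}$ and $\av{y}_{L^2}$) yields the strong convergence $A^n\to A$ in $\La(H^s,L^2)$ with uniform bound, hence (vi); while $\av{T_{f(\e_1^n)}-T_{f(\e_1)}}_{L^2}\le c(s)\avs{f(\e_1^n)-f(\e_1)}{s}$, together with the analogous bound for the multiplication commutator, gives the strong convergence $B^n\to B$ in $\La(L^2)$ with uniform bound, hence (vii).

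The main obstacle is exactly the commutator term $B$ entering (iii) and (vii): its $L^2$-boundedness, uniform bound and strong continuity all rest on the commutator estimate of Lemma \ref{lem:sobolevestimates}(iv), which requires the coefficient multiplying $\partial_x$ to be controlled in $H^s$, while the generation estimate (ii) needs $f(\e_1)\in W^{1,\infty}$ and $g(\e_2)\in L^\infty$. The delicate point is therefore to secure these norms from the regularity of $f$ and $g$; the weak thresholds $H^{1/2}$ and $H^{3/2}$ only guarantee the lower-order generation bounds, so one must use the full regularity actually available. This causes no difficulty in the application of the next theorem, where $f$ and $g$ are built from the KdV solution and its $x$-derivatives and hence inherit the $H^s$ regularity needed to make every constant above finite and continuously dependent on the parameters.
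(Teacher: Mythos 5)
Your proposal is correct and takes essentially the same route as the paper, whose own proof merely cites Examples \ref{ex:fd} and \ref{ex:multiplication} for the generation property (so that $A\in G(L^2,1,\beta+\beta')$ with $\beta=\tfrac{1}{2}\sup_{x\in\bb{R}}\norm{f_x}$, $\beta'=\sup_{x\in\bb{R}}\norm{g}$) and then asserts that conditions (i) through (vii) follow by the same steps as in the proof of Theorem \ref{thm:localgenkdv}; your splitting $B=T_{f(\e_1)}+[\Lambda^s,g(\e_2)]\Lambda^{-s}$, the appeal to Lemma \ref{lem:sobolevestimates}(iv) together with the analogous bound for the multiplication commutator, and the termwise difference estimates for (vi)--(vii) are exactly those steps written out. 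Your closing caveat is moreover a genuine catch rather than a defect of your argument: the stated thresholds $f(\e_1)\in H^{1/2}$, $g(\e_2)\in H^{3/2}$ do not even make the paper's own constants finite ($f\in H^{1/2}$ does not give $f_x\in L^\infty$, and neither coefficient is controlled in $H^s$ as the commutator estimate requires), whereas in the only place the Lemma is used -- the proof of Theorem \ref{thm:main}, where $f=u(\e)\in H^s$ and $g=u_x(\e+h)\in H^{s-1}$ -- all the norms your argument needs are finite and depend continuously on the parameters.
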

\begin{proof}
The operator $A(\e_1,\e_2,\e_3)$ was considered in Examples \ref{ex:fd} and \ref{ex:multiplication} above, where it was shown that
it belongs to $G(X,1,\beta+\beta')$ with $\beta=\frac{1}{2}\sup_{x\in \bb{R}}\norm{f_x}$ and
$\beta'=\sup_{x\in \bb{R}}\norm{g}$.
The verification of conditions (i) through (vii) follows the same steps as the proof of Theorem \ref{thm:localgenkdv}.
\end{proof}

\begin{Thm}\label{thm:main}
Let $N=\lfloor s/3-1\rfloor$ and $[0,T]$ be a time interval such that $0<T<t_c$.
Let 
$$u: \bb{R} \to C([0,T],H^s) \cap C^1([0,T],H^{s-3})$$
 be
the map that associates to $\e \in \bb{R}$ the unique solution of the KdV Cauchy problem (\ref{eq:kdv}).
Then, there exist and are continuous the maps
$$u^{(k)}: \bb{R} \rightarrow C([0,T],
H^{s-3k})\cap C^1 ([0,T],H^{s-3(k+1)}),$$ 
for $k=1,\dots,N,$ defined as 
$$u^{(k)}(\e)=\frac{d^{k}u(\e)}{d\e^{k}}.$$ 
Fixed $\e\in \bb{R}$ and $k\in\bb{N}$, $1\leq k \leq N$,  then the function $u^{(k)}(\e)$ satisfies the following Cauchy problem
\begin{align}
&u^{(k)}_{t}=A_{\e}u^{(k)}+\sum_{j=1}^{k-1}\binom{k}{j}u^{(j)}u^{(k-j)}_{x}+k\,u^{(k-1)}_{xxx},\label{ukeqs}\\
&u^{(k)}|_{t=0}(\e)=0 ,
\end{align}
where the linear operator $A_{\e}$ is defined as
\begin{equation}\label{eq:ae}
A_{\e}=u(\e)\partial_{x}+u_{x}(\e)+\e \partial_{x}^{3},
\end{equation}
and we use the convention $u^{(0)}(\e) =u(\e)$.
\end{Thm}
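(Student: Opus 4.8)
The plan is to argue by induction on $k$, establishing at each level the existence of $u^{(k)}$, the governing equation~(\ref{ukeqs}), and continuity in $\e$ all at once. The engine is the pair of linear Theorems~\ref{thm:linear} and~\ref{thm:linearpert}: for fixed $\e$ I define the candidate $k$-th derivative as the unique solution of the \emph{linear} Cauchy problem~(\ref{ukeqs}) (its forcing involves only the already-constructed lower derivatives $u^{(0)},\dots,u^{(k-1)}$), and then identify it with the actual derivative by showing that the difference quotients of $u^{(k-1)}$ converge to it. The shape of~(\ref{ukeqs}) is itself obtained by formally applying $\partial_\e^k$ to~(\ref{eq:kdv}): Leibniz turns $uu_x$ into $\sum_{j=0}^k\binom{k}{j}u^{(j)}u^{(k-j)}_x$, whose $j=0$ and $j=k$ terms $u\,u^{(k)}_x+u_x\,u^{(k)}$ are absorbed into $A_\e u^{(k)}$, while $\partial_\e^k(\e\,u_{xxx})=\e\,u^{(k)}_{xxx}+k\,u^{(k-1)}_{xxx}$ supplies the dispersive part of $A_\e$ together with the forcing $k\,u^{(k-1)}_{xxx}$. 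Note that, being linear, Theorem~\ref{thm:linear} solves on the \emph{full} interval $[0,T]$, so no shrinking of the time interval occurs.

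For the base case $k=1$, Theorem~\ref{thm:kdvcontinuity} makes $\e\mapsto u(\e)$ continuous into $C([0,T],H^s)\cap C^1([0,T],H^{s-3})$, so the coefficients $u(\e)$, $u_x(\e)$ and the forcing $u_{xxx}(\e)\in C([0,T],H^{s-3})$ are well defined. Working in the scale $Y=H^{s-3}$, $X=H^{s-6}$ (admissible since $N\ge1$ forces $s\ge6$), I apply Lemma~\ref{lem:A} with $f=u(\e)$, $g=u_x(\e)$ to verify the hypotheses of Theorem~\ref{thm:linear}, obtaining a unique $v(\e)\in C([0,T],H^{s-3})\cap C^1([0,T],H^{s-6})$ solving $v_t=A_\e v+u_{xxx}(\e)$ with $v|_{t=0}=0$. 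To identify $v(\e)$ with $du/d\e$, I set $w_h=h^{-1}(u(\e+h)-u(\e))$ and telescope the products, which gives
\begin{equation*}
w_{h,t}=A_h w_h+u_{xxx}(\e+h),\qquad w_h|_{t=0}=0,\qquad A_h=u(\e+h)\partial_x+u_x(\e)+\e\,\partial_x^3
\end{equation*}
(the datum is zero because $\varphi$ is $\e$-independent). Continuity of $u(\cdot)$ yields $A_h\to A_\e$ strongly in $\La(Y,X)$, $B_h\to B$ strongly in $\La(X)$, and $u_{xxx}(\e+h)\to u_{xxx}(\e)$ in $C([0,T],Y)$; Theorem~\ref{thm:linearpert} then forces $w_h\to v(\e)$, so $u^{(1)}(\e)=v(\e)$. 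Letting $\e$ itself vary in one further application of Theorem~\ref{thm:linearpert} gives continuity of $\e\mapsto u^{(1)}(\e)$.

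For the inductive step, assume $u^{(0)},\dots,u^{(k-1)}$ exist, are continuous in $\e$ with the stated regularity, and solve~(\ref{ukeqs}). By the algebra and embedding estimates of Lemma~\ref{lem:sobolevestimates}, the forcing $F_k=\sum_{j=1}^{k-1}\binom{k}{j}u^{(j)}u^{(k-j)}_x+k\,u^{(k-1)}_{xxx}$ is continuous into $C([0,T],H^{s-3k})$: each factor $u^{(j)}\in H^{s-3j}$ and $u^{(k-j)}_x\in H^{s-3(k-j)-1}$ embeds into $H^{s-3k}$ (here $j\ge1$ is used), and $u^{(k-1)}_{xxx}\in H^{s-3k}$. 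Exactly as before, now in the scale $Y=H^{s-3k}$, $X=H^{s-3(k+1)}$ — admissible precisely because $k\le N$ gives $s-3k\ge3$ — Lemma~\ref{lem:A} and Theorem~\ref{thm:linear} produce a unique $v^{(k)}(\e)$ in the claimed class solving $v^{(k)}_t=A_\e v^{(k)}+F_k$ with $v^{(k)}|_{t=0}=0$. Taking $W_h=h^{-1}(u^{(k-1)}(\e+h)-u^{(k-1)}(\e))$, subtracting~(\ref{ukeqs}) at $\e+h$ and $\e$ and dividing by $h$, the quadratic and dispersive terms again split into an operator $A_h\to A_\e$ acting on $W_h$ plus a forcing part; crucially, the inductive hypothesis makes $F_{k-1}$ differentiable in $\e$, so its difference quotient converges to $dF_{k-1}/d\e$. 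Collecting the limiting forcing and reorganizing the binomial coefficients by Pascal's rule $\binom{k-1}{m-1}+\binom{k-1}{m}=\binom{k}{m}$ shows that it equals exactly $F_k$. Theorem~\ref{thm:linearpert} then gives $W_h\to v^{(k)}(\e)$, so $u^{(k)}=v^{(k)}$, and continuity follows by letting $\e$ vary once more.

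The hard part is the inductive identification of the derivative with the linear solution: one must set up the difference-quotient Cauchy problem, check that its coefficients and forcing converge in the precise norms demanded by hypotheses (vi)--(viii) of Theorem~\ref{thm:linearpert}, and verify that the limiting forcing reassembles into $F_k$. The combinatorial reorganization is short once the telescoping is arranged, so the real effort is analytic: tracking the Sobolev indices (each differentiation in $\e$ costs three spatial derivatives, and the scale must remain above $H^3$) and guaranteeing the strong convergence $A_h\to A_\e$, $B_h\to B$ in the \emph{shifted} scales $(H^{s-3(k+1)},H^{s-3k})$ so that the perturbation theorem applies. It is exactly this bookkeeping that consumes the regularity and explains the constraint $s\ge 3N+3$.
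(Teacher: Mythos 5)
Your argument is correct and follows essentially the same route as the paper: the difference quotients of $u^{(k-1)}$ satisfy a linear non-homogeneous Cauchy problem whose coefficients and forcing converge as $h\to 0$, so Kato's linear Theorems \ref{thm:linear}--\ref{thm:linearpert} together with Lemma \ref{lem:A} identify their limit with the solution of \eqref{ukeqs}, and one inducts on $k$. The only cosmetic difference is your choice of scale $(Y,X)=(H^{s-3k},H^{s-3(k+1)})$, which obliges you to verify the semigroup-generation hypothesis on $H^{s-3(k+1)}$ (an extra commutator estimate not literally contained in Lemma \ref{lem:A}); the paper instead keeps $X=L^{2}$ throughout and recovers the $C^{1}([0,T],H^{s-3(k+1)})$ regularity afterwards directly from the equation.
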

\noindent
Note that \eqref{ukeqs} is a linear non-homogeneous differential equation.

\begin{proof}
If $N=0$, then the first part of the theorem follows from Theorem \ref{thm:kdvcontinuity}, while the second part is empty.
Assume $N\geq 1$, we now prove differentiability. To this aim we introduce the difference quotient
$$
u^{(1)}(\e,h)=\frac{u(\e+h)-u(\e)}{h} \; ,
$$
and a simple computation shows that $u^{(1)}(\e,h)$ satisfies the equation
$$
u^{(1)}_t(\e,h)=A(\e,h)u^{(1)}(\e,h)+u_{xxx}(\e+h) ,\qquad  u^{(1)}(\e,h)|_{t=0}=0 \; ,
$$
where $A(\e,h)= u(\e)\partial_{x}+u_{x}(\e+h)+\e \,\partial_{x}^{3}$. In the limit $h \to 0$, the above equation converges to (\ref{ukeqs}), with $k=1$.
This is a linear non-homogeneous equation, with forcing term $u_{xxx}(\e)$. 
From Theorem \ref{thm:kdvcontinuity}, we have that $u_{xxx}(\e)$ is a continuous function from $\bb{R}$ to $C([0,T],H^{s-3})$.

Hence, we can prove the convergence of $\lim_{h\to 0}u^1(\e,h)$ using the perturbation Theorem  \ref{thm:linearpert},
provided that:  i) we look for solutions of (\ref{ukeqs}) lying in the same space of the forcing term, namely $H^{s-3}$, and
ii) the forcing term belongs to
$D(A_{\e})$. The latter condition holds since $s\geq 6$ by hypothesis.

Due to Theorems \ref{thm:linear} and \ref{thm:linearpert} and to Lemma \ref{lem:A}, we conclude that
$$u^{(1)}(\e):= \lim_{h\to0}u^{(1)}(\e,h)$$ 
solves (\ref{ukeqs}) and maps $\bb{R}$ continuously to $C([0,T],H^{s-3})\cap C^1([0,T],L^2)$. Moreover, we have that
the function $u^{(1)}$ maps $\bb{R}$ continuously into $$C([0,T],H^{s-3}) \cap C^1([0,T],H^{s-6}).$$
Indeed, $u^{(1)}_t(\e)$ equals $A_{\e}u^{(1)}+u_{xxx}(\e)$, and the operator $A_\e$
maps any continuous function $\bb{R} \to C([0,T],H^{s-3})$ to a continuous function $\bb{R} \to C([0,T],H^{s-6})$. The last statement can be proved in a similar way as in the proof of Theorem \ref{thm:localgenkdv}.  \\

We continue the proof by induction on the order of the derivative. Suppose the thesis is valid for $i=1,\dots,k <N$.
As before, we define the difference quotient
$$
u^{(k+1)}(\e,h)=\frac{u^{(k)}(\e+h)-u^{(k)}(\e)}{h} \; ,
$$
that satisfies the non-homogeneous linear equation

{\setlength\arraycolsep{1pt}
\begin{eqnarray}
&u^{(k+1)}_t(\e,h)=&A(\e,h)u^{(k+1)}(\e,h)  + u^{(k)}_{xxx}(\e+h)+f^{k+1}(\e,h) \notag\\
&{}&+u^{(1)}(\e)u^{(k)}_{x}(\e)+ u^{(1)}_x(\e)u^{(k)}(\e),  \label{uk+1eq}\\
&u^{(k+1)}(\e,h)|_{t=0}&=0, {}\notag
\end{eqnarray}
}
where
\begin{align}
 f^{k+1}(\e,h)= \frac{1}{h}&\left( \sum_{j=1}^{k-1}\binom{k}{j}u^{(j)}(\e+h)u^{(k-j)}_{x}(\e+h)+k\,u^{(k-1)}_{xxx}(\e+h)-\right. \notag \\
&\left.\sum_{j=1}^{k-1}\binom{k}{j}u^{(j)}(\e)u^{(k-j)}_{x}(\e)+k\,u^{(k-1)}_{xxx}(\e)\right).
\end{align}
The non-homogeneous term of equation \eqref{uk+1eq} belongs to $C([0,T],H^{s-3(k+1)}),$ and it continuously depends on $\e$.
In the limit $h \to 0$, the quantity $f^{k+1}(\e,h)$ converges in $C([0,T],H^{s-{(3k+3)}})$, continuously with respect to $\e$,
to $$f_{\e}^{k+1}(\e) := \frac{d}{d\e}\left(\sum_{j=1}^{k-1}\binom{k}{j}u^{(j)}(\e)u^{(k-j)}_{x}(\e)+k\,u^{(k-1)}_{xxx}(\e)\right).$$
Hence, the same reasoning as in the case of $u^{(1)}(\e)$ shows that 
$$u^{(k+1)}(\e):= \lim_{h\to0}u^{(k)}(\e,h)$$
solves the equation
{\setlength\arraycolsep{1pt}
\begin{eqnarray}
& u^{(k+1)}_t(\e)=&A(\e)u^{(k+1)}(\e)+u^{(k)}_{xxx}(\e)+f_{\e}^{k+1}(\e) \notag \\ 
& &  +u^{(1)}(\e)u^{(k)}_{x}(\e)+ u^{(1)}_x(\e)u^{(k)}(\e), \label{eq:k+1proof}\\
&u^{(k+1)}(\e)|_{t=0}&=0, \notag
\end{eqnarray}
}
and it maps $\bb{R}$ continuously to $C([0,T], H^{s-3(k+1)})\cap C^1 ([0,T],H^{s-3(k+2)})$.
It is a simple computation to show that (\ref{eq:k+1proof}) coincides with (\ref{ukeqs}).
\end{proof}

Theorem \ref{thm:main} shows that if the initial datum of the KdV
equation lies in the Sobolev space
$H^s$, then the solution of the Cauchy problem is $N-$times
differentiable with respect to $\e$, for $N=\lfloor s/3-1\rfloor$.
Consequently, if the initial datum lies in all the Sobolev space -- e.g. it
belongs to the Schwartz class -- then the solution of the Cauchy problem
is smooth with respect to $\e$. In particular, the solution admits an
asymptotic expansion in power series of $\e$. More precisely, we have the following corollary of Theorem \ref{thm:main}.

\begin{Cor}
\label{cor:hinfty}
Let $\varphi \in H^{\infty}=\cap_{s\geq0}H^s$, $T>0$ be any positive
time smaller than
the critical time $T<t_c$ and $u: \bb{R} \to C([0,T],H^{\infty})$ be
the solution of the Cauchy problem (\ref{eq:kdv}).
Then
\begin{itemize}
\item[(i)]$u: \bb{R} \to C([0,T],H^{\infty})$ is a smooth function (of $\e$).
\item[(ii)]In $\e=0$, $u$ admits an asymptotic expansion in power
series of $\e$:
\begin{equation}\label{formula:asymptotic}
u(\e) \sim \sum_{k=0}^{\infty} v^{k}\,\e^k.
\end{equation}
where $v^{0}=u(0)$ is the solution of the Cauchy problem
\begin{align}
&v^{0}_{t}=v^{0}v^{0}_{x}\label{eq:hopf}\\
&v^{0}|_{t=0}=\varphi,
\end{align}
for the Hopf equation, and $v^{k}=u^{(k)}(0)$ is the solution of the $k$-th linear  Cauchy problem
(\ref{ukeqs}) when $\e=0$, that is:
\begin{align}
&v^{k}_{t}=\sum_{j=0}^{k}\binom{k}{j}v^{j}v^{k-j}_{x}+k\,v^{k-1}_{xxx},\label{ukeqshopf}\\
&v^{k}|_{t=0}=0,\qquad k\geq 1.
\end{align}
\end{itemize}
 
\end{Cor}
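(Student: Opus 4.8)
The plan is to derive both assertions from Theorem \ref{thm:main} by letting the Sobolev index tend to infinity, and then to read off the expansion from Taylor's theorem. First I would record that the critical time $t_c$ is the gradient--catastrophe time of the Hopf solution with datum $\varphi$, so it depends on $\varphi$ alone and not on any Sobolev index; hence one fixed interval $[0,T]$ with $T<t_c$ is admissible at every regularity at once. To prove (i), fix $k\in\bb{N}$ and a target order $m\geq0$. Since $\varphi\in H^{\infty}$ it lies in $H^{s}$ for every $s$, and choosing $s$ large enough that $s\geq 3(k+1)$ (so that $k\leq N=\lfloor s/3-1\rfloor$) and $s-3k\geq m$, Theorem \ref{thm:main} gives that $u^{(k)}(\e)=d^{k}u(\e)/d\e^{k}$ exists and defines a continuous map $\bb{R}\to C([0,T],H^{s-3k})\subset C([0,T],H^{m})$. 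Because the difference quotients defining $u^{(k)}$ are the same functions at every regularity, their limits agree as elements of each $H^{s-3k}$; hence $u^{(k)}(\e)$ is independent of the auxiliary index $s$ and, letting $m\to\infty$, lies in $C([0,T],H^{\infty})$ and depends continuously on $\e$. As $k$ is arbitrary, $u\colon\bb{R}\to C([0,T],H^{\infty})$ is smooth (the case $k=0$ being the analogous statement drawn from Theorem \ref{thm:kdvcontinuity} at each $s$).

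For (ii), fix $s\geq0$; by (i) the map $\e\mapsto u(\e)$ is smooth into the Banach space $C([0,T],H^{s})$, so Taylor's theorem with integral remainder yields, for every $M$,
\[
u(\e)-\sum_{k=0}^{M}\frac{u^{(k)}(0)}{k!}\,\e^{k}=\frac{1}{M!}\int_{0}^{\e}(\e-\tau)^{M}\,u^{(M+1)}(\tau)\,d\tau,
\]
and the right-hand side is $o(\e^{M})$ in $C([0,T],H^{s})$ because $u^{(M+1)}$ is continuous, hence bounded near $\e=0$. Since $s$ is arbitrary, this establishes the asymptotic expansion \eqref{formula:asymptotic} in $C([0,T],H^{\infty})$, whose coefficients are the Taylor coefficients $u^{(k)}(0)/k!$ of $u$ at the origin. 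I stress that no convergence of the series is needed: the statement is purely the order-$M$ approximation above.

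It remains to identify the coefficients. Recall from Theorem \ref{thm:main} that $u^{(k)}(\e)$ solves the linear problem \eqref{ukeqs}. Setting $v^{k}:=u^{(k)}(0)$ and evaluating \eqref{ukeqs} at $\e=0$, the dispersive term of $A_{\e}$ disappears and $A_{0}=v^{0}\partial_{x}+v^{0}_{x}$; absorbing $v^{0}v^{k}_{x}+v^{0}_{x}v^{k}$ into the sum as its $j=0$ and $j=k$ terms turns \eqref{ukeqs} into \eqref{ukeqshopf}. For $k=0$ this is the Hopf equation $v^{0}_{t}=v^{0}v^{0}_{x}$ with datum $\varphi$, consistent with Corollary \ref{cor:continuityhopf}; for $k\geq1$, differentiating the $\e$--independent initial condition $k$ times gives $v^{k}|_{t=0}=0$. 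Thus the recursion for the coefficients is already encoded in \eqref{ukeqs}.

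The genuinely delicate point is (i), namely making the passage $s\to\infty$ rigorous. One must use that $t_c$ is independent of $s$, so that a single $[0,T]$ serves all regularities, and that difference quotients converging in a strong norm converge to the same limit in every weaker norm, so that the $\e$--derivatives assemble into genuine elements of the Fr\'echet space $C([0,T],H^{\infty})$ rather than living only in each $H^{s-3k}$. Once smoothness into $C([0,T],H^{\infty})$ is secured, (ii) is routine, being an immediate consequence of Taylor's theorem together with the already-established equation \eqref{ukeqs}.
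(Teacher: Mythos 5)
Your proof is correct and is essentially the paper's own (implicit) argument: the corollary is stated there without a separate proof, as an immediate consequence of Theorem \ref{thm:main} obtained by letting $s\to\infty$ and invoking Taylor's theorem, exactly as you spell out. One remark worth keeping: your identification of the Taylor coefficients as $u^{(k)}(0)/k!$ exposes a $1/k!$ normalization that the statement of \eqref{formula:asymptotic} glosses over --- as written, with $v^{k}=u^{(k)}(0)$ solving \eqref{ukeqshopf} (whose binomial coefficients are those of the unnormalized derivatives), the expansion should read $u(\e)\sim\sum_{k}v^{k}\e^{k}/k!$.
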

Note that a similar Theorem was proven for the defocusing Nonlinear Schr\"odinger equation \cite{grenier98}.\\

Below, we state the analogue of Theorem \ref{thm:main} for the general equation \eqref{eq:genkdv}
and we give a sketch the proof. The details of the full proof, which is rather long, will be given elsewhere.

\begin{Thm}\label{thm:genmain}
Let $U$ be an open subset of $\bb{R}^n$, $[0,T]$ be a time interval
such that the Cauchy problem is locally well-posed and continuous with respect to $\e \in U$ and
let
$$u: \bb{R} \to C([0,T],H^s) \cap C^1([0,T],H^{s-(2n+1)})$$
be the map that associates to $\e \in U$ the unique solution of the Cauchy problem for (\ref{eq:genkdv}).
Moreover, let $K=\sum_{i=1}^{n}N_i(2i+1)$ and $N=\sum_{i=1}^{n}N_i$. If
$s-K \geq 2n+1$, then the partial derivative
$$\frac{\partial^N u(\e)}{\partial\e_1^{N_1}\dots\partial\e_n^{N_n}} $$ exists
 in $C([0,T],
H^{s-K})\cap C^1 ([0,T],H^{s-K-(2n+1)})$
and is continuous with respect to  $\e \in U$.
\end{Thm}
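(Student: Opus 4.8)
The plan is to extend the one-dimensional induction of Theorem~\ref{thm:main} to the multi-index setting, treating the $n$ parameters $\e_1,\dots,\e_n$ as independent directions and differentiating one direction at a time while keeping careful track of the Sobolev regularity consumed at each step. First I would establish the correct bookkeeping: each derivative $\partial_{\e_i}$ of the solution introduces a forcing term involving $\partial_x^{2i+1}u$ (the coefficient of $\e_i$ in the linearity $\sum_i \e_i \partial_x^{2i+1}$), which costs $2i+1$ derivatives of regularity. Hence taking $N_i$ derivatives in the $i$-th direction costs $N_i(2i+1)$, and the total loss after the full multi-index $(N_1,\dots,N_n)$ is exactly $K=\sum_{i=1}^n N_i(2i+1)$, explaining the hypothesis $s-K\geq 2n+1$, which ensures every intermediate operator $A(u;\e)$ still acts as required (it needs $2n+1$ spare derivatives to generate a $C_0$-semigroup, exactly as in Theorem~\ref{thm:localgenkdv}).

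Next I would set up a double induction, or equivalently an induction on the total order $N=\sum_i N_i$, mirroring the proof of Theorem~\ref{thm:main}. For the inductive step I would form the difference quotient in the chosen direction, say $\e_j$,
\[
u^{(N+e_j)}(\e,h)=\frac{\partial^N u(\e+h e_j)-\partial^N u(\e)}{h},
\]
where $e_j$ is the $j$-th standard basis vector, and verify by a Leibniz-type computation that it satisfies a linear non-homogeneous equation of the form $\partial_t w = A_{\e} w + F$, where $A_\e = a(u)\partial_x + a'(u)u_x + \sum_{i=1}^n \e_i \partial_x^{2i+1}$ is the linearized operator and $F$ collects the forcing terms built from lower-order derivatives (which exist and are continuous by the inductive hypothesis) together with the term $\partial_x^{2j+1}\partial^N u(\e+he_j)$ coming from differentiating in the $\e_j$ direction. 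The generalized Lemma~\ref{lem:A} -- in which $\e_3\partial_x^3$ is replaced by $\sum_i \e_i \partial_x^{2i+1}$ and the scalar coefficients $f,g$ become the appropriate $H^{1/2}$- and $H^{3/2}$-valued functions built from $a(u)$ and its derivatives -- shows that this operator family satisfies conditions (i)--(vii) of Theorems~\ref{thm:linear} and~\ref{thm:linearpert}, so I may apply the linear perturbation theorem to pass to the limit $h\to 0$ and obtain both existence in $C([0,T],H^{s-K})\cap C^1([0,T],H^{s-K-(2n+1)})$ and continuity in $\e\in U$.

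The main obstacle, and the reason the authors defer the full proof, is the combinatorial and regularity-tracking complexity of the forcing terms: unlike the KdV case where each differentiation produces the single clean term $u_{xxx}^{(k-1)}$ plus a Leibniz sum of products $u^{(j)}u_x^{(k-j)}$, here each of the $n$ dispersive terms contributes its own derivative cost, and a mixed partial of order $N$ generates a sum over all ways of distributing the $N_i$ derivatives, each summand requiring a separate verification that it lies in $H^{s-K}$ via the algebra property, the Schauder estimate, and the commutator estimate of Lemma~\ref{lem:sobolevestimates}. One must check that the \emph{worst} regularity loss among all these summands is precisely $K$ and not larger; this amounts to confirming that the term carrying the full weight $\partial_x^{2j+1}$ on the highest available derivative is the binding constraint, while all genuinely nonlinear (product) terms are strictly better because the algebra property distributes the derivatives. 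Once this uniform-in-summand regularity bound is in place, the rest is a routine but lengthy iteration of the argument already carried out in detail for KdV, which is why a sketch suffices here.
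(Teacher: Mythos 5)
Your proposal follows exactly the route the paper indicates: induction on the total order of differentiation, difference quotients in one parameter direction at a time, the linearized operator $a(u)\partial_x + a'(u)u_x + \sum_i\e_i\partial_x^{2i+1}$, and an appeal to the linear perturbation theorem via a generalized Lemma~\ref{lem:A}, with the hypothesis $s-K\geq 2n+1$ guaranteeing that the forcing term stays in the domain of $\partial_x^{2n+1}$ -- which is precisely the point the paper's own (sketched) proof singles out. Your write-up is in fact more detailed on the regularity bookkeeping than the paper, which defers the full argument elsewhere, but it is the same approach.
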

\begin{proof}
The Theorem can be proven along the very same lines of the proof of the analogue Theorem \ref{thm:main} for KdV. 
More precisely, it is possible to prove existence and continuity of the partial derivative, by showing that
it satisfies a linear non-homogeneous equation. Note that the condition $s-K \geq 2n+1$ implies that the forcing term belongs to the domain of $\partial_x^{2n+1}$.
\end{proof}

\section{Hamiltonian perturbation of quasilinear conservation laws}\label{sec:dubrovin}
We now consider the results of the previous sections in the setting of the general construction,
proposed by Dubrovin and Zhang \cite{du06,du10,duzh01}, of Hamiltonian regularization of the quasilinear conservation law:
\begin{equation}\label{eq:genhopf}
u_{t}=a(u)\,u_{x}, \qquad u|_{t=0}=\varphi,
\end{equation}
where $a$ and the initial value $\varphi$ are assumed to be smooth functions, and $\varphi$ is either periodic
or rapidly decreasing at infinity. We discuss the aspects of the Dubrovin-Zhang construction which are more related with the present paper;
in the next section we will show how the results obtained in Section \ref{sec:continuity} and \ref{sec:kdvderivatives} provide a
rigorous justification to this method for a particular class of equations of type \eqref{eq:genkdv}.

Let us consider equation \eqref{eq:genhopf}. It is well known that this equation can formally be written as a
Hamiltonian system
\begin{equation}
u_{t}=\left\{u,H\right\}=\partial_{x}\frac{\delta H}{\delta u(x)},
\end{equation}
with Hamiltonian
$$H=\int_{\bb{R}}h(u(x))\,dx,\qquad h''(u)=a(u),$$
and where the Euler-Lagrange operator is defined as
$$\frac{\delta H}{\delta u(x)}=\sum_{k\geq 0}(-1)^{k}\frac{d^{k}}{d x^{k}}\frac{\partial \,h}{\partial u_{x}^{(k)}},$$
for any local functional $H=\int h(u,u_{x},u_{xx},\dots)\,dx$. The above Poisson bracket is given by
$$\left\{H_{1},H_{2}\right\}=\int_{\bb{R}}\frac{\delta H_{1}}{\delta u(x)}\partial_{x}\frac{\delta H_{2}}{\delta u(x)}\,dx,$$
for any pair of functionals $H_{i}=\int h_{i}(u, u_{x},u_{xx},\dots)\,dx$, $i=1,2$. \\

Following Dubrovin, by \emph{Hamiltonian regularization} (or \emph{perturbation}) of the quasilinear conservation law
\eqref{eq:genhopf} we mean an expression
\begin{equation}\label{eq:genperturbation}
u_{t}=\left\{u,\tilde{H}\right\}=\partial_{x}\frac{\delta \tilde{H}}{\delta u(x)}, \qquad u|_{t=0}=\varphi,
\end{equation}
where the Hamiltonian is given by a formal series
$$\tilde{H}=H+\sum_{k\geq 1}H_{k}\,\e^{k},\qquad H_{k}=\int h_{k}(u;u_{x},\dots,u^{(k)}_{x})\,dx,\quad k\geq 1,$$
for some $h_{k}$, which are assumed to be differential polynomials in the derivatives.
In addition, the solutions of equation \eqref{eq:genperturbation} are sought  to be of the form
\begin{equation}\label{expu}
u(x,t)=\sum_{i\geq 0} v^{i}(x,t)\,\e^{\,2i},
\end{equation}
with coefficients  $v^{i}$ smooth functions of $x$ and $t$. Within this setting, all identities are understood in the sense of
\emph{formal power series} in $\e$ - they are assumed to hold identically at every order in $\e$. Therefore, the perturbed Hamiltonian and
the solutions are not required to be convergent (neither asymptotic) series. Note, however,
that the initial values of the perturbed and the unperturbed equations are assumed to be the same. In particular, the function $\varphi$ is independent of $\e$, and from the expansion \eqref{expu} we deduce the identities
\begin{equation}\label{eq:ic}
v^0(x,0)=\varphi(x),\, \qquad v^i(x,0)=0, \, \quad \forall \,i \geq 1.
\end{equation}

A primary task in the Dubrovin-Zhang approach is the classification of Hamiltonian perturbations, which is performed
by considering equations \eqref{eq:genperturbation} modulo \emph{quasi-Miura transformation}. These are transformations
of the form:
\begin{equation}\label{eq:miura}
u\longmapsto v=\sum_{k\geq 0}\e^{k}F_{k}(u;u_{x},\dots,u^{(k)}_{x}),
\end{equation}
where the functions $F_{k}$ are rational with respect to the derivatives. A partial result is given by the following
\begin{Thm}\cite{du06}\label{thm:borisclass}
Any Hamiltonian perturbation of equation \eqref{eq:genhopf} of order $\e^{4}$ can be reduced by a Miura-type
transformation to an equation of the form \eqref{eq:genperturbation}, with Hamiltonian
\begin{gather}
\tilde{H}=\int \tilde{h}(u;u_{x},u_{xx}, \e) dx, \qquad h''=a, \label{eq:normham}\\
\tilde{h}=h-\frac{\e^2}{2}\,c\,h'''\,u_{x}^2+
\e^4\left[\left(p\,h'''+\frac{3}{10}\,c^{2}\,h^{(4)}\right) u_{xx}^2\right. \notag\\
\left.- \left(\frac{c\,c''}{8}\,h^{(4)}+\frac{c\,c'}{8}\,h^{(5)}+\frac{c^{2}}{24}h^{(6)}+
\frac{p'}{6}h^{(4)}+\frac{p}{6}h^{(5)}-s\,h'''\right)u_{x}^4 \right],\notag
\end{gather}
for arbitrary functions $c(u)$, $p(u)$, $s(u)$.
\end{Thm}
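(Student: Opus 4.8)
The statement is quoted from \cite{du06}; my plan is to reconstruct the order-by-order normal-form reduction. I would work in the ring of differential polynomials in $u,u_x,u_{xx},\dots$, graded by assigning degree $j$ to $\partial_x^j u$, and regard two densities as equivalent whenever they differ by a total $x$-derivative, since this leaves the functional $H_k=\int h_k\,dx$ unchanged. Because $\frac{\delta H}{\delta u}=h'(u)$ and $\partial_x h'(u)=a\,u_x$, the unperturbed flow is the Hopf flow $u_t=a\,u_x$, and repeated integration by parts collapses each homogeneous functional to a canonical representative.

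First I would normalize the densities. Degree $1$ is a total derivative, so $H_1=0$; degree $2$ reduces to $\int f_2(u)\,u_x^2\,dx$; degree $3$ to $\int f_3(u)\,u_x^3\,dx$; and degree $4$ to $\int\bigl(P(u)\,u_{xx}^2+Q(u)\,u_x^4\bigr)\,dx$. Thus, before any transformation, the perturbation through order $\e^4$ is encoded by the four functions $f_2,f_3,P,Q$. The target \eqref{eq:normham} is written in exactly these canonical monomials, so the whole problem is to see which of $f_2,f_3,P,Q$ can be moved by a Miura-type transformation \eqref{eq:miura} and how the survivors repackage into $c,p,s$.

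Next I would let the transformation act. Writing the bracket-preserving (canonical) transformations as flows generated by functionals $\e^k K_k$, the Hamiltonian is transported by iterated Poisson brackets, $\tilde H\mapsto\tilde H+\sum_k\e^k\{\tilde H,K_k\}+\tfrac12\sum\e^{k+l}\{\{\tilde H,K_k\},K_l\}+\cdots$, so the classification reduces, order by order, to computing the cokernel of the linear map $K\mapsto\{H,K\}$ on homogeneous functionals --- a Poisson-cohomology calculation for $\partial_x$. The two computations that drive everything, obtained by the same integration by parts as above, are $\{H,\int r\,u_x^2\,dx\}=-\int a'\,r\,u_x^3\,dx$ and $\{H,\int m\,u_x^3\,dx\}=-2\int a'\,m\,u_x^4\,dx$. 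Running the reduction: at order $\e^2$ there is no nontrivial degree-$1$ generator (a degree-$1$ density is a total derivative), so $f_2$ is a genuine invariant, which I normalize as $f_2=-\tfrac12\,c\,h'''$, defining the arbitrary function $c(u)$ (recall $h'''=a'$); at order $\e^3$ the map $r\mapsto-a'r$ is onto for $a'\neq0$, so $H_3$ is removed entirely, which is why \eqref{eq:normham} carries no odd powers of $\e$.

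The order-$\e^4$ step is where I expect the real work, and the main obstacle. One must combine three contributions at this order: the first-order action of a fresh degree-$3$ generator on $H$ (which by the second formula above moves only the $u_x^4$ coefficient $Q$, never the $u_{xx}^2$ coefficient $P$); the second-order iterated-bracket action of the lower-order transformation already fixed by $c$; and --- since a faithful reduction passes through an intermediate deformation of the Poisson bracket that is afterwards restored to $\partial_x$ --- the induced change in the Hamiltonian, which is precisely the mechanism by which a third function survives rather than being absorbed into $Q$. Reducing the resulting degree-$4$ density modulo total derivatives and expanding every coefficient by the Leibniz rule in terms of $h''',h^{(4)},h^{(5)},h^{(6)}$ and of $c,c',c''$ then produces the explicit coefficients of $u_{xx}^2$ and $u_x^4$ displayed in \eqref{eq:normham}, leaving the free pair $p(u),s(u)$. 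The hard part is entirely this last bookkeeping: controlling the many integration-by-parts identities among degree-$4$ monomials, correctly tracking the bracket-deformation/restoration contribution, and verifying that the residual freedom is exactly $(p,s)$ with the stated coefficients.
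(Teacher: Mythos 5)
The paper offers no proof of this statement: Theorem \ref{thm:borisclass} is imported verbatim from \cite{du06} and used as a black box, so there is no internal argument to measure your reconstruction against. Taken on its own terms, your outline is strategically faithful to the Dubrovin--Zhang reduction, and the two identities you compute, $\{H,\int r\,u_x^2\,dx\}=-\int a'r\,u_x^3\,dx$ and $\{H,\int m\,u_x^3\,dx\}=-2\int a'm\,u_x^4\,dx$, are correct, as are the conclusions you draw at orders $\e^2$ and $\e^3$ (a degree-$1$ density is exact, so $f_2$, hence $c$, is an invariant; $H_3$ is removable when $a'\neq 0$). Do note that the genuine-nonlinearity hypothesis $a'\neq 0$, which your order-$\e^3$ step silently uses, is only implicit in the statement and should be made explicit.

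There are, however, two genuine gaps. First, the deep input of the theorem is not the normal form of the Hamiltonian but the normalization of the Poisson bracket: one must know that every $\e$-deformation of the bracket $\{u(x),u(y)\}=\delta'(x-y)$ is trivial, i.e.\ can be undone by a Miura-type transformation \eqref{eq:miura} (the vanishing of the relevant Poisson cohomology, due to Getzler and to Degiovanni--Magri--Sciacca). You allude to ``an intermediate deformation of the Poisson bracket that is afterwards restored to $\partial_x$'' but neither name nor justify this step; without it the reduction to the form \eqref{eq:genperturbation} with the undeformed bracket does not get off the ground. Second, the entire quantitative content of the theorem --- the explicit coefficients of $u_{xx}^2$ and $u_x^4$ in \eqref{eq:normham} and the claim that the residual freedom is exactly the pair $(p,s)$ --- is precisely the computation you defer as ``the hard part.'' Your own formulas already show that a degree-$3$ generator moves only the $u_x^4$ coefficient, so the coefficient of $u_{xx}^2$ is an invariant and its parametrization as $p\,h'''+\tfrac{3}{10}c^2h^{(4)}$, as well as the $c$- and $p$-dependent terms accompanying $s\,h'''$ in the $u_x^4$ coefficient, must either be derived from the degree-$4$ integration-by-parts reduction and the residual bracket contributions, or recognized as a normalization convention and verified as such; you do neither. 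As it stands the proposal is a sound plan of attack rather than a proof, and the honest alternatives are to carry out the degree-$4$ bookkeeping explicitly or to cite \cite{du06}, as the paper does.
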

Let us now consider in more detail the solutions of the perturbed equation \eqref{eq:genperturbation}, which -- after Theorem \ref{thm:borisclass} -- we will consider together with a  Hamiltonian of the form \eqref{eq:normham}.  By expanding both sides of equation \eqref{eq:genperturbation} according
to the Ansatz \eqref{expu}, in first approximation one obtains
\begin{equation}\label{eq:vhopf} 
v^{0}_{t}=a(v^{0})\,v^{0}_{x},
\end{equation}
which says that $v^{0}$ must be a solution of the unperturbed equation \eqref{eq:genhopf}.  Accordingly, from the higher order coefficients one obtains an infinite set of \emph{linear non-homogeneous}
equations (or \emph{transport equations}) for the coefficients $v^{k}(x,t)$, which can be solved recursively
starting from the solution of \eqref{eq:vhopf}. For instance, the equation for $v^{1}$ turns out to be
\begin{equation}\label{eq:tr1}
v^{1}_{t}=\partial_{x}\left(a \,v^{1}+c\, a'\, v^{0}_{xx}+\frac{1}{2}\left(c\,  a''+c'\,  a'\right)\, \left(v^{0}_{x}\right) ^{2}\right), \quad v^1|_{t=0}=0
\end{equation}
where
$a=a\left(v^{0}\right)$, $c=c\left(v^{0}\right)$.
\begin{Rem}
In the class of initial data considered in the present paper, any solution of equation \eqref{eq:vhopf} develops a singularity at a finite time $t=t_c>0$, known as time of gradient catastrophe. An important aspect of Dubrovin's theory is concerned with the study  of the solution of \eqref{eq:genperturbation} in a neighborhood of the critical time $t_c$, in order to show how the singularity is regularized by the dispersive perturbations \cite{du06}.

This, however, is out of the scope of our present method of investigation.
In what follows we study the series \eqref{expu} for any time interval
$[0,T]$ strictly smaller than the critical time: $T<t_c$.
\end{Rem}

\begin{Exa}
The KdV equation 
$$u_{t}=u\,u_{x}+\e^{2}\,u_{xxx}$$
is obtained from \eqref{eq:genperturbation}, \eqref{eq:normham} by choosing $h(u)=\tfrac{1}{6}u^{3},$ $c(u)=1$, and $p(u)=s(u)=0$. In this case, a simple computation shows that the Cauchy problems for the  transport equations are given by
\begin{align*}
&v^{k}_{t}=\sum_{j=0}^{k}\binom{k}{j}v^{j}v^{k-j}_{x}+k\,v^{k-1}_{xxx},\qquad k\geq 0,\\
&v^{0}|_{t=0}=\varphi,\quad v^{k}|_{t=0}=0.
\end{align*}
One can -- in principle -- solve these equations recursively.
Note that these Cauchy problems coincide with \eqref{eq:hopf}, \eqref{ukeqshopf}. 
\end{Exa}
From the above discussion it follows that -- at least in principle -- all coefficients of the expansion \eqref{expu} can be obtained once a solution $v^{0}$ of equation \eqref{eq:vhopf} is known.  The method followed in \cite{du06,du10} (see also the older result \cite{bagaib89}) to find solutions of the perturbed equation \eqref{eq:genperturbation} is to construct a quasi-Miura transformation, relating the solution $v^{0}(x,t)$ of the unperturbed equation \eqref{eq:vhopf} to the the solution $u(x,t)$ of perturbed equation \eqref{eq:genperturbation}. The required transformation has been suggested in \cite{du06} to be of the form:
\begin{equation}\label{cantrans}
v^{0}\longmapsto u=v^{0}-\e\,\left\{v^{0}(x),K\right\}+ \e^2\left\{\left\{v^{0}(x),K\right\},K\right\}+\dots
\end{equation}
where the functional $K$, up to order $4$ in $\e$, is given by
\begin{equation}
K=-\!\!\int\left[\e\frac{c(v^{0})}{2}\,v^{0}_{x}\log{v^{0}_{x}}+\e^{3}\left(\frac{c(v^{0})^{2}}{40}
\left(\frac{v^{0}_{xx}}{v^{0}_{x}}\right)^{3}\!\!-\frac{p(v^{0})}{4}\frac{\left(v^{0}_{xx}\right)^{2}}{v^{0}_{x}}\right)\right] dx.
\end{equation}
This in particular implies
\begin{equation}\label{eq:borisu1}
v^{1}=\frac{1}{2}\partial_{x}\left(c(v^{0})\frac{v^{0}_{xx}}{v^{0}_{x}}+c'(v^{0})\,v^{0}_{x}\right),
\end{equation}
and a direct substitution shows that the above function satisfies equation \eqref{eq:tr1}, provided $v^{0}$ satisfies \eqref{eq:vhopf}.
Note, however, that the function \eqref{eq:borisu1} is bounded only for monotone solutions of equation \eqref{eq:vhopf}. Furthermore, \eqref{eq:borisu1} does not satisfy the required initial condition \eqref{eq:ic}.

\section{Solutions of the transport equations}\label{sec:solutionv1}

The classification problem of Hamiltonian perturbations, together with
the quasi-Miura transformation discussed above, are main ingredients
of the Dubrovin--Zhang constructions before the critical time, which is the
time-span we are interest in the present paper. As already noticed,
this approach is mainly based on identities of formal power series,
and indeed, if for a given Hamiltonian the corresponding equation
\eqref{eq:genperturbation} is just a formal series,
then the only way to construct a (formal) solution seems to be through
the use of a formal series, for instance like \eqref{expu}.

However, if the Hamiltonian perturbation \eqref{eq:genperturbation} is
well-defined -- for example if the Hamiltonian $\tilde{H}$
is given by \eqref{eq:normham} -- then the resulting equation may
happen to be locally well-posed in some function space, say $H^s$,
for $s$ big enough.
At the same time, the solution may be $N-$differentiable with respect to
$\e$ and the formal series \eqref{eq:genperturbation} may just be
-- up to order $N$ -- the Taylor expansion of the actual solution.\\

Using the results of Section \ref{sec:continuity} and
\ref{sec:kdvderivatives} we can show that this is true
if the Hamiltonian perturbation \eqref{eq:genperturbation} coincides with a
generalised KdV equation \eqref{eq:genkdv}.
This is the case for the general equation (\ref{eq:genkdv}), provided $n\leq 2$.
Indeed, the following Lemma holds:\\

\begin{Lem}\label{lem:borisvsus}
The equation
\begin{equation}\label{eq:borisgenkdv}
u_{t}=a(u)\,u_{x}+\e^{2}\,\alpha\,u_{xxx}+\e^{4}\,\beta\,u_{xxxxx},
\end{equation}
which is obtained by \eqref{eq:genkdv} in the case $n=2$ and
$\e_{1}=\alpha\,\e^{2}, \e_2=\beta\,\e^4$, coincides with
the Hamiltonian perturbation (\ref{eq:genperturbation})
with Hamiltonian \eqref{eq:normham}, provided the coefficients are
chosen in the following way:
\begin{gather}\label{eq:parameters}
c=\frac{\alpha}{a'},\quad
p=\frac{\beta}{2\,a'}-\frac{3\,\alpha^{2}}{10}\frac{a''}{(a')^{3}},\\
\nonumber
s=\alpha^{2}\,\left(\frac{2}{5}\,\frac{(a'')^{3}}{(a')^{5}} -\frac
{7}{20}\frac{a''\,a'''}{(a')^{4}}
+\frac{1}{24}\frac{a''''}{(a')^{3}}\right)
-\frac{\beta}{12}\left(\frac{(a'')^{2}}{(a')^{3}}-\frac{a'''}{(a')^{2}}\right).
\end{gather}
\end{Lem}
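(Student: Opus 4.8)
The plan is to verify directly that, for the stated coefficients $c,p,s$, the Hamiltonian flow $u_t=\partial_x\frac{\delta\tilde H}{\delta u(x)}$ with Hamiltonian \eqref{eq:normham} reproduces the right-hand side of \eqref{eq:borisgenkdv}, expanding order by order in $\e$. The reduction that makes this tractable is that the target is itself a total $x$-derivative: since $h''=a$,
\[ a(u)u_x+\e^2\alpha u_{xxx}+\e^4\beta u_{xxxxx}=\partial_x\!\left(h'+\e^2\alpha u_{xx}+\e^4\beta u_{xxxx}\right). \]
Because applying $\partial_x$ then reproduces \eqref{eq:borisgenkdv}, it suffices to establish the single identity $\frac{\delta\tilde H}{\delta u(x)}=h'+\e^2\alpha u_{xx}+\e^4\beta u_{xxxx}$, checked at orders $\e^0,\e^2,\e^4$ separately. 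Throughout I use $h'''=a'$, $h^{(4)}=a''$, $h^{(5)}=a'''$, $h^{(6)}=a^{(4)}$.

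The order $\e^0$ part is immediate, $\frac{\delta}{\delta u}\int h\,dx=h'$. At order $\e^2$ the density is $-\tfrac12 c\,h'''u_x^2$, and a short Euler--Lagrange computation gives
\[ \frac{\delta}{\delta u}\!\int\!\Big(-\tfrac12 c\,h'''u_x^2\Big)dx=\tfrac12(c\,h''')'u_x^2+c\,h'''u_{xx}. \]
Matching this to $\alpha u_{xx}$ forces the nonlinear coefficient $(c\,h''')'$ to vanish and the leading coefficient to equal $\alpha$, i.e.\ $c\,a'\equiv\alpha$, which is the first relation in \eqref{eq:parameters}. This is the whole mechanism at this order: choosing $c$ so that $c\,a'$ is constant collapses the $\e^2$ flow to the pure term $\alpha u_{xxx}$.

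The order $\e^4$ step is the core. Writing the $\e^4$ density as $\tilde h_4=P\,u_{xx}^2+Q\,u_x^4$ with
\[ P=p\,a'+\tfrac{3}{10}c^2a'',\qquad Q=-\Big(\tfrac{cc''}{8}a''+\tfrac{cc'}{8}a'''+\tfrac{c^2}{24}a^{(4)}+\tfrac{p'}{6}a''+\tfrac{p}{6}a'''-s\,a'\Big), \]
I would apply $\frac{\delta}{\delta u}=\frac{\partial}{\partial u}-\frac{d}{dx}\frac{\partial}{\partial u_x}+\frac{d^2}{dx^2}\frac{\partial}{\partial u_{xx}}$ and collect by the monomials $u_{xxxx},u_xu_{xxx},u_{xx}^2,u_x^2u_{xx},u_x^4$, whose coefficients come out to be $2P,\,4P',\,3P',\,2P''-12Q,\,-3Q'$. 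Matching to $\beta u_{xxxx}$ gives $2P=\beta$, which determines $p$ (the second relation in \eqref{eq:parameters}) and, since $\beta$ is constant, makes $P'=P''=0$, so the $u_xu_{xxx}$ and $u_{xx}^2$ coefficients vanish automatically. The remaining nonlinear conditions then collapse to the single equation $Q\equiv0$ (the $u_x^2u_{xx}$ coefficient becomes $-12Q$ and the $u_x^4$ coefficient $-3Q'$), and $Q=0$ is exactly the defining relation for $s$.

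The final step is to substitute $c=\alpha/a'$ and the explicit $p$ into $s\,a'=\tfrac{cc''}{8}a''+\tfrac{cc'}{8}a'''+\tfrac{c^2}{24}a^{(4)}+\tfrac{p'}{6}a''+\tfrac{p}{6}a'''$ and simplify; grouping the resulting powers of $a''/a'$, $a'''/a'$ and $a^{(4)}/a'$ reproduces the stated expression for $s$ in \eqref{eq:parameters}. I expect the main obstacle to be purely computational: the order $\e^4$ Euler--Lagrange calculation demands care in expanding $\frac{d^2}{dx^2}(2P\,u_{xx})$ term by term, and the closing reduction of $Q=0$ to the formula for $s$ is a lengthy chain-rule exercise for $c',c'',p'$. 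The conceptual content---and the reason three functions $c,p,s$ suffice---is that the variational structure forces $P$ to be constant, so that every nonlinear dispersive term at order $\e^4$ is controlled by the single scalar $Q$, which the choice of $s$ is then free to annihilate.
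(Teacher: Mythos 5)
Your proposal is correct: the Euler--Lagrange coefficients $2P,\,4P',\,3P',\,2P''-12Q,\,-3Q'$ are right, and substituting $c=\alpha/a'$ and $p=\tfrac{\beta}{2a'}-\tfrac{3\alpha^2}{10}\tfrac{a''}{(a')^3}$ into $Q\equiv 0$ does reproduce the stated formula for $s$. The paper states this lemma without proof, leaving exactly this direct verification implicit, so your argument is the intended one.
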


\begin{Thm}\label{thm:expansion}
Let $\varphi \in H^{s}, s\geq 11 $, $U$ a neighborhood of $\e=0$,
$u(\e)$ be the solution of the Cauchy problem
\eqref{eq:borisgenkdv} with initial data $\varphi$ and $[0,T]$ be a
time interval such that the Cauchy problem is locally well-posed for
any $\e \in U$ and continuous with respect to $\e \in U$.
Moreover, let $s\geq 5+6N$. Then $u(\e)$ has a Taylor expansion in $\e=0$, up to order $4N$, of the form
\begin{equation}\label{formula:genasymptotic}
u(\e) = \sum_{k=0}^{2N} v^{k}\,\e^{2k}+ r(\e),
\end{equation}
where
\begin{equation*}
v^k \in C([0,T],H^{s-3k}),\qquad r(\e) \in C([0,T], H^{s-6N}).
\end{equation*}
Here $v^{0}=u(0)$ is the solution of the Cauchy problem
for unperturbed equation \eqref{eq:vhopf}, while
$v^{1}$ satisfies the first transport equation \eqref{eq:tr1}
with parameters \eqref{eq:parameters},
$v^k, k>1$ is the solution of the higher transport equations.
The remainder term $r$ is $o(\e^{4N})$.

Moreover, if $\alpha=0$ then the same is true, under weaker hypothesis. Namely, if $s \geq 5+5N$, then $v^k=0$ for
$k$ odd, $v^{2l} \in C([0,T],H^{s-5l})$,
and $r:\bb{R} \to C([0,T, H^{s-5N}])$ is continuous, with $r=o(\e^{4N})$.
\begin{proof}
The proof follows from Theorem \ref{thm:genmain}.
\end{proof}
\end{Thm}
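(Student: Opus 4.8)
The plan is to apply Theorem~\ref{thm:genmain} to the special equation \eqref{eq:borisgenkdv}, identifying the abstract $N$-fold $\e$-derivatives there with the concrete Taylor coefficients $v^k$ obtained from the Dubrovin--Zhang expansion \eqref{expu}. First I would set up the one-parameter family: writing $\e_1=\alpha\,\e^2$ and $\e_2=\beta\,\e^4$, equation \eqref{eq:borisgenkdv} is a generalised KdV equation of the class \eqref{eq:genkdv} with $n=2$, so by Lemma~\ref{lem:borisvsus} it coincides with the Hamiltonian perturbation \eqref{eq:genperturbation}--\eqref{eq:normham} for the parameters \eqref{eq:parameters}. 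Then I would invoke Theorem~\ref{thm:genmain} to produce the mixed partial derivatives $\partial^{N_1}_{\e_1}\partial^{N_2}_{\e_2}u$ and read off the regularity loss: each differentiation in $\e_1$ costs three derivatives and each in $\e_2$ costs five, so a total of $N_1+N_2$ differentiations with $K=3N_1+5N_2$ lands in $C([0,T],H^{s-K})$ provided $s-K\geq 2n+1=5$.

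The heart of the argument is a bookkeeping step: since the physical parameter is $\e$, and $\e_1,\e_2$ depend on $\e$ through $\e^2$ and $\e^4$, the chain rule forces the $\e$-Taylor expansion to run in even powers $\e^{2k}$, and I would match orders to show that $v^k=\tfrac{1}{k!}\,\tfrac{d^k}{d(\e^2)^k}u\big|_{\e=0}$. To reach order $\e^{4N}$ I need the coefficient $v^{2N}$, hence I must be able to differentiate up to the relevant total order; tracking which combinations of $\e_1$- and $\e_2$-derivatives contribute at each power of $\e$ gives the Sobolev index $s-3k$ claimed for $v^k$ and the index $s-6N$ for the remainder $r$. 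The hypothesis $s\geq 5+6N$ is exactly what guarantees, via Theorem~\ref{thm:genmain}, that all the required derivatives exist in the stated spaces; the auxiliary bound $s\geq 11$ secures the base regularity needed for the underlying well-posedness and continuity in $U$. Finally, that $v^0$ solves \eqref{eq:vhopf} and $v^1$ solves \eqref{eq:tr1} follows from Corollary~\ref{cor:hinfty}-type matching of the formal series \eqref{expu} with the actual Taylor expansion, since the two expansions agree term by term up to the order controlled by the differentiability.

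For the case $\alpha=0$, I would note that then $\e_1\equiv 0$, so the equation depends on $\e$ only through $\e_2=\beta\,\e^4$; differentiating solely in $\e_2$ costs five derivatives per order, the expansion proceeds in powers of $\e^4$, all odd $v^k$ vanish, and the index bookkeeping sharpens to $v^{2l}\in C([0,T],H^{s-5l})$ with remainder in $C([0,T],H^{s-5N})$. This accounts for the weaker hypothesis $s\geq 5+5N$. I expect the main obstacle to be precisely the order-matching: verifying that the abstract mixed $\e_i$-derivatives supplied by Theorem~\ref{thm:genmain} assemble, under the substitution $\e_1=\alpha\e^2,\ \e_2=\beta\e^4$ and the chain rule, into a genuine $\e$-Taylor polynomial whose coefficients are exactly the transport-equation solutions $v^k$, and that the remainder is controlled as $o(\e^{4N})$ in the asserted Sobolev space with the correct loss of derivatives. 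The regularity indices and the parity of the expansion must be tracked carefully, but no new analytic input beyond Theorem~\ref{thm:genmain} and Lemma~\ref{lem:borisvsus} is needed.
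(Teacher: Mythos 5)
Your proposal is correct and takes essentially the same approach as the paper: the paper's entire proof is the single line ``The proof follows from Theorem \ref{thm:genmain},'' and your argument --- reduction via Lemma \ref{lem:borisvsus}, the substitution $\e_1=\alpha\e^2$, $\e_2=\beta\e^4$, application of Theorem \ref{thm:genmain} with $K=3N_1+5N_2$ and $s-K\geq 5$, and the order/regularity bookkeeping yielding $v^k\in C([0,T],H^{s-3k})$, the threshold $s\geq 5+6N$, and the sharper indices in the $\alpha=0$ case --- is precisely what that citation leaves implicit. The Taylor-remainder control that you flag as the main obstacle is likewise left unaddressed by the authors, so your writeup is, if anything, more detailed than the paper's own proof.
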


\begin{Rem}
Due to Theorem \ref{thm:localgenkdv}, $T_Q >0$ for any $Q\subset \bb{R}$.
If, for  $(\e,\alpha,\beta) \neq (0,0,0)$ the Cauchy problem is
globally well-posed \cite{kenig91}, then the time $T_Q$  in the above
theorem can be chosen to be any positive
time smaller than the critical time $t_{c}$ of the unperturbed
equation (\ref{eq:vhopf}). This is the case, for instance, if
$a(u)=u$. 
\end{Rem}

In the following theorem we provide a correction to the formula \eqref{eq:borisu1} for the first coefficient $v^{1}$. Our formula turns out to be valid for solutions of \eqref{eq:vhopf} which are not monotone, and satisfies the correct initial value. 

\begin{Thm} Consider the Cauchy problem for equation \eqref{eq:vhopf} with initial datum $v^{0}(0)=\varphi\in H^{s}$, $s\geq 3$, denote by $t_{c}$ be the associated critical time, and let $v^{0}(x,t)$, with $(x,t)\in\bb{R}\times [0,t_{c}),$ be its unique classical solution. Then, the solution of the linear transport equation \eqref{eq:tr1}, with $v^{0}$ given above and initial datum $v^{1}(0)=0$ is
$$v^{1}(x,t)=\frac{\partial}{\partial x}\left(\frac{\delta \tilde{K}_{t}[u]}{\delta u(x)}_{|u=v^{0}(x,t)}\right),\qquad (x,t)\in\bb{R}\times [0,t_{c}),$$
where the family of functionals $\tilde{K}_{t}$, with $t\in\bb{R}$, is defined by
$$\tilde{K}_{t}[u]:=-\frac{1}{2}\int_{\bb{R}} c\,(u)\,u_{x}\,\log{\Big(1+t\,a'\left(u\right)u_{x}\Big)}\,dx,$$
for every $u\in H^{s}$, with $\avs{u}{s}$ small enough. 
\end{Thm}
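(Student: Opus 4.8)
The plan is to verify by direct substitution that the proposed function solves the linear Cauchy problem \eqref{eq:tr1} and then to invoke uniqueness. Since \eqref{eq:tr1} is a linear non-homogeneous transport equation with continuous coefficients and source on $[0,t_c)$, its solution with $v^{1}|_{t=0}=0$ is unique, so it suffices to check the formula. Write $P(x,t):=\frac{\delta\tilde K_t}{\delta u(x)}\big|_{u=v^0}$, so that $v^1=\partial_x P$. Because \eqref{eq:tr1} is in conservation form, $v^1_t=\partial_x(\cdots)$, I would first integrate it once in $x$: using the decay of $v^0$ and its derivatives (valid for $\varphi\in H^s$ and $t<t_c$), the task reduces to the scalar \emph{potential identity}
\begin{equation*}
P_t=a\,P_x+c\,a'\,v^0_{xx}+\tfrac12\big(c\,a''+c'\,a'\big)(v^0_x)^2,\qquad a=a(v^0),\ c=c(v^0),
\end{equation*}
the $x$-independent integration constant being forced to vanish by the decay at infinity.

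Next I would compute $P$ in closed form from the Euler--Lagrange operator applied to the density $L=-\tfrac12\,c(u)\,u_x\log\xi$, with $\xi:=1+t\,a'(u)u_x$. The key simplification is the algebraic identity $\frac{t\,c\,a'\,u_x}{\xi}=c\big(1-\xi^{-1}\big)$, which eliminates the logarithm from $\partial L/\partial u_x$ and makes the two $c'u_x\log\xi$ contributions in $\partial L/\partial u$ and in $\partial_x(\partial L/\partial u_x)$ cancel. What remains is a rational expression in $u,u_x,u_{xx}$ and $\xi$; evaluated at $u=v^0$ it reproduces exactly the formula for $v^1=\partial_x P$ announced in the Introduction. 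I would stress that, although $\tilde K_t[u]$ itself is defined only for $\avs{u}{s}$ small (so that $\xi>0$ and the integral converges), the variational derivative $P$ is a \emph{local} differential expression and is therefore well defined pointwise wherever $\xi>0$.

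This positivity is the structural heart of the statement. By the method of characteristics, writing $x=y-t\,a(\varphi(y))$ and $v^0=\varphi(y)$, one finds $\xi=1+t\,a'(v^0)v^0_x=\big(1-t\,a'(\varphi)\varphi'\big)^{-1}$, i.e. $\xi$ is precisely the Jacobian $\partial y/\partial x$ of the characteristic map. Hence $\xi>0$ on $[0,t_c)$ and $\xi\to1$ as $|x|\to\infty$, which guarantees that the formula is smooth and decaying \emph{without any monotonicity assumption} on $v^0$; this is the improvement over \eqref{eq:borisu1}, whose $\log v^0_x$ requires $v^0_x>0$. The initial condition is then immediate: at $t=0$ the integrand of $\tilde K_t$ carries $\log 1=0$, so $P|_{t=0}\equiv0$ and therefore $v^1|_{t=0}=\partial_x P|_{t=0}=0$.

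The main obstacle is the verification of the potential identity. I would differentiate the closed form of $P$ in $t$ --- keeping track of both the explicit $t$ in $\xi$ and the implicit dependence through $v^0_t=a\,v^0_x$ --- and in $x$, and then reduce everything using the Hopf equation and its differentiated consequences $v^0_{xt}=a'(v^0_x)^2+a\,v^0_{xx}$ and $v^0_{xxt}=\partial_x\big(a'(v^0_x)^2+a\,v^0_{xx}\big)$. The bookkeeping is heavy, and the cleanest way to organise it is to pass to the characteristic variable $y$, in which $v^0=\varphi(y)$ is $t$-independent and $\xi=(1-t\,m_0)^{-1}$ with $m_0=a'(\varphi)\varphi'$; alternatively one may integrate the characteristic ODE $\frac{d}{dt}\big(v^1/\xi\big)=\xi^{-1}\partial_x S$, with $S=c\,a'\,v^0_{xx}+\tfrac12(c\,a''+c'\,a')(v^0_x)^2$, and match the result against $\partial_x P$. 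Either route is elementary but delicate; the one genuine check is that no spurious $x$-independent term survives, which is exactly where the decay of $v^0$ and the vanishing of $\tilde K_t$ at $t=0$ are used.
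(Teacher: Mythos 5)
Your proposal is correct and takes essentially the same route as the paper: the paper simply writes out the explicit rational expression for $v^{1}=\partial_x P$ (formula \eqref{eq:v1}) and asserts that a direct calculation verifies \eqref{eq:tr1} and the initial condition, which is exactly the substitution-plus-uniqueness argument you outline. Your additional observations --- that the Euler--Lagrange computation kills the logarithms and yields $P=\tfrac{t}{2}\bigl(c\,a'\,v^0_{xx}\,\xi^{-1}+\partial_x(c\,a'\,v^0_x\,\xi^{-1})\bigr)$, and that along characteristics $\xi=1+t\,a'(v^0)v^0_x=(1-t\,a'(\varphi)\varphi')^{-1}>0$ for $t<t_c$ so the formula needs no monotonicity of $v^0$ --- are consistent with, and make explicit, what the paper leaves as ``a direct calculation.''
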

\begin{proof}
The explicit form of the function $v^{1}$ stated in the theorem is given by
\begin{equation}\label{eq:v1}
v^{1}=\frac{t}{2}\,\frac{\partial}{\partial x}\! \left(\frac{\left(c\,a'\right)'(v^{0}_{x})^{2}+2\,c\,a'\,v^{0}_{xx}+t\,c\,(a')^{2}v^{0}_{x}\,v^{0}_{xx}+t\,c'\,(a')^{2}(v^{0}_{x})^{3}}{(1+t\,a'\,v^{0}_{x})^{2}}\right),
\end{equation}
where the functions $a$, $c$, and the corresponding derivatives are evaluated at $u=v^{0}(x,t)$. A direct calculation shows that this function satisfies equation \eqref{eq:tr1} with the correct initial value.
\end{proof}

\begin{Rem}
Note that the above theorem holds for any choice of the functions $a$ and $c$. This fact suggests that similar results of the one obtained in Theorem \ref{thm:expansion}  remain true for a generic Hamiltonian perturbation of the quasilinear conservation law \eqref{eq:genhopf}.
\end{Rem}

\begin{Rem}
Formula \eqref{eq:v1} has been obtained making use of the so called \lq string equation\rq , introduced in the setting of Hamiltonian perturbations of nonlinear PDEs by Dubrovin  \cite{du06}. Although heuristic, the use of the string equation turns out to be a very powerful method for describing solutions of the perturbations both before the critical time and in a neighborhood of it.
\end{Rem}

\bibliographystyle{plain}
\bibliography{biblio}

\def\cprime{$'$} \def\cprime{$'$} \def\cprime{$'$} \def\cprime{$'$}
  \def\cprime{$'$} \def\cprime{$'$} \def\cprime{$'$} \def\cprime{$'$}
  \def\cprime{$'$} \def\cprime{$'$} \def\cydot{\leavevmode\raise.4ex\hbox{.}}
  \def\cprime{$'$} \def\cprime{$'$}
\begin{thebibliography}{10}

\bibitem{bagaib89}
V.~A. Ba{\u\i}kov, R.~K. Gazizov, and N.~Kh. Ibragimov.
\newblock Approximate symmetries and formal linearization.
\newblock {\em Zh. Prikl. Mekh. i Tekhn. Fiz.}, 2:40--49, 1989.

\bibitem{bosm75}
J.~L. Bona and R.~Smith.
\newblock The initial-value problem for the {K}orteweg-de {V}ries equation.
\newblock {\em Philos. Trans. Roy. Soc. London Ser. A}, 278(1287):555--601,
  1975.

\bibitem{bourgain93}
J.~Bourgain.
\newblock Fourier transform restriction phenomena for certain lattice subsets
  and applications to nonlinear evolution equations. {II}. {T}he
  {K}d{V}-equation.
\newblock {\em Geom. Funct. Anal.}, 3(3):209--262, 1993.

\bibitem{claeys2009}
T.~Claeys and T.~Grava.
\newblock Universality of the break-up profile for the {K}d{V} equation in the
  small dispersion limit using the {R}iemann-{H}ilbert approach.
\newblock {\em Comm. Math. Phys.}, 286(3):979--1009, 2009.

\bibitem{colliander2003}
J.~Colliander, M.~Keel, G.~Staffilani, H.~Takaoka, and T.~Tao.
\newblock Sharp global well-posedness for {K}d{V} and modified {K}d{V}on {$\Bbb
  R$} and {$\Bbb T$}.
\newblock {\em J. Amer. Math. Soc.}, 16(3):705--749 (electronic), 2003.

\bibitem{deift98}
P.~Deift, S.~Venakides, and X.~Zhou.
\newblock An extension of the steepest descent method for {R}iemann-{H}ilbert
  problems: the small dispersion limit of the {K}orteweg-de {V}ries ({K}d{V})
  equation.
\newblock {\em Proc. Natl. Acad. Sci. USA}, 95(2):450--454 (electronic), 1998.

\bibitem{deift93}
P.~Deift and X.~Zhou.
\newblock A steepest descent method for oscillatory {R}iemann-{H}ilbert
  problems. {A}symptotics for the {MK}d{V} equation.
\newblock {\em Ann. of Math. (2)}, 137(2):295--368, 1993.

\bibitem{du06}
B.~Dubrovin.
\newblock On {H}amiltonian perturbations of hyperbolic systems of conservation
  laws. {II}. {U}niversality of critical behaviour.
\newblock {\em Comm. Math. Phys.}, 267(1):117--139, 2006.

\bibitem{du10}
B.~Dubrovin.
\newblock Hamiltonian {PDE}s: deformations, integrability, solutions.
\newblock {\em J. Phys. A}, 43(43):434002, 20, 2010.

\bibitem{duzh01}
B.A. Dubrovin and Y.~Zhang.
\newblock {N}ormal forms of hierarchies of integrable {PDE}s, {F}robenius
  manifolds and {G}romov - {W}itten invariants.
\newblock arXiv:math/0108160v1.

\bibitem{grenier98}
E.~Grenier.
\newblock Semiclassical limit of the nonlinear {S}chr\"odinger equation in
  small time.
\newblock {\em Proc. Amer. Math. Soc.}, 126(2):523--530, 1998.

\bibitem{ka73}
T.~Kato.
\newblock Linear evolution equations of ``hyperbolic'' type. {II}.
\newblock {\em J. Math. Soc. Japan}, 25, 1973.

\bibitem{ka75}
T.~Kato.
\newblock Quasi-linear equations of evolution, with applications to partial
  differential equations.
\newblock In {\em Spectral theory and differential equations ({P}roc.
  {S}ympos., {D}undee, 1974; dedicated to {K}onrad {J}\"orgens)}, pages 25--70.
  Lecture Notes in Math., Vol. 448. Springer, Berlin, 1975.

\bibitem{ka83}
T.~Kato.
\newblock On the {C}auchy problem for the (generalized) {K}orteweg-de {V}ries
  equation.
\newblock In {\em Studies in applied mathematics}, volume~8 of {\em Adv. Math.
  Suppl. Stud.}, pages 93--128. Academic Press, New York, 1983.

\bibitem{kawa72}
T.~Kawahara.
\newblock Oscillatory solitary waves in dispersive media.
\newblock {\em J. Phys. Soc. Japan}, 33:260--264, 1972.

\bibitem{kenig91}
C.~E. Kenig, G.~Ponce, and L.~Vega.
\newblock Well-posedness of the initial value problem for the {K}orteweg-de
  {V}ries equation.
\newblock {\em J. Amer. Math. Soc.}, 4(2):323--347, 1991.

\bibitem{kenig96}
C.~E. Kenig, G.~Ponce, and L.~Vega.
\newblock A bilinear estimate with applications to the {K}d{V} equation.
\newblock {\em J. Amer. Math. Soc.}, 9(2):573--603, 1996.

\bibitem{lale83}
P.~D. Lax and C.~D. Levermore.
\newblock The small dispersion limit of the {K}orteweg-de {V}ries equation.
  {I}, {II}, {III}.
\newblock {\em Comm. Pure Appl. Math.}, 36(3, 5, 6):253--290, 571--593,
  809--829, 1983.

\bibitem{martel2004}
Y.~Martel and F.~Merle.
\newblock Review on blow up and asymptotic dynamics for critical and
  subcritical g{K}d{V} equations.
\newblock In {\em Noncompact problems at the intersection of geometry,
  analysis, and topology}, volume 350 of {\em Contemp. Math.}, pages 157--177.
  Amer. Math. Soc., Providence, RI, 2004.

\bibitem{pazy83}
A.~Pazy.
\newblock {\em Semigroups of linear operators and applications to partial
  differential equations}, volume~44 of {\em Applied Mathematical Sciences}.
\newblock Springer-Verlag, New York, 1983.

\bibitem{tao2006}
T.~Tao.
\newblock {\em Nonlinear dispersive equations}, volume 106 of {\em CBMS
  Regional Conference Series in Mathematics}.
\newblock Published for the Conference Board of the Mathematical Sciences,
  Washington, DC, 2006.
\newblock Local and global analysis.

\bibitem{venakides85}
S.~Venakides.
\newblock The zero dispersion limit of the {K}orteweg-de {V}ries equation for
  initial potentials with nontrivial reflection coefficient.
\newblock {\em Comm. Pure Appl. Math.}, 38(2):125--155, 1985.

\end{thebibliography}

\end{document}